\documentclass[11pt, a4paper, notitlepage]{article}

\usepackage{amsmath, latexsym, amsthm, amsfonts} % Useful packages.
\usepackage{graphicx} % Three packages for figures.
\usepackage{epsfig}
\usepackage{varioref}
\usepackage{natbib} % Package for the bibliography.

\bibpunct{(}{)}{;}{a}{}{,} % Punctuation for citations.

% My definitions for theorems etc.

\theoremstyle{plain}
\newtheorem{thm}{Theorem}[section]
\newtheorem{prop}{Proposition}[section]

\newtheorem{lem}{Lemma}[section]

\newtheorem{cnd}{Condition}[section]

\theoremstyle{definition} % I don't want examples and remarks in italic.

\newtheorem{rem}{Remark}[section]

% Commands used in the text.

\newcommand{\infint}{\int_{-\infty}^{\infty}}
\newcommand{\Log}{\operatorname{Log}}
\def\convd{\stackrel{\cal D}{\rightarrow}}
\def\convp{\stackrel{\operatorname{P}}{\rightarrow}}
\def\ex{{\rm E\,}}
\def\var{\mathop{\rm Var}\nolimits}

\begin{document}
\date{\today}
\title{Decompounding under Gaussian noise}
\author{Shota Gugushvili\thanks{The research of the author was financially supported by the
Nederlandse Organisatie voor Wetenschappelijk Onderzoek (NWO).}\\
{\normalsize Korteweg-de Vries Institute for Mathematics}\\
{\normalsize Universiteit van Amsterdam}\\
{\normalsize Plantage Muidergracht 24}\\
{\normalsize 1018 TV Amsterdam}\\
{\normalsize The Netherlands}\\
{\normalsize sgugushv@science.uva.nl}}
\maketitle
\begin{abstract}
Assuming that a stochastic process\index{process!L\'evy} $X=(X_t)_{t\geq 0}$ is a sum of a compound Poisson process
$Y=(Y_t)_{t\geq 0}$ with known intensity $\lambda$ and unknown jump size density $f,$ and an independent
Brownian\index{motion!Brownian} motion $Z=(Z_t)_{t\geq 0},$ we consider the problem of nonparametric estimation of $f$
from low frequency observations from $X.$ The estimator of $f$ is constructed via Fourier inversion and kernel
smoothing. Our main result deals with asymptotic normality of the proposed estimator at a fixed point.
\medskip\\
{\sl Keywords:} asymptotic normality, Brownian motion, compound Poisson process, decompounding,
kernel density estimation\\
{\sl AMS subject classification:} 62G07, 62G20\\
\end{abstract}
\newpage

\section{Introduction}

\label{deconvnoise-intro} Let $Y=(Y_t)_{t\geq0}$ be a compound Poisson\index{process!compound Poisson} process with
intensity $\lambda$ and jump size distribution $F,$ which has a density $f.$ Assume that $Z=(Z_t)_{t\geq0}$ is a
Brownian\index{motion!Brownian} motion independent of $Y$ and consider the stochastic process $X_t=Y_t+Z_t.$ Notice
that $X=(X_t)_{t\geq 0}$ is a L\'evy process\index{process!L\'evy}. Suppose that $X$ is observed at equidistant time
points $\Delta,2\Delta,\ldots,n\Delta.$ By a rescaling argument, without loss of generality, we may take $\Delta=1.$
Given a sample $X_1,X_2,\ldots,X_n,$ the statistical problem we consider is nonparametric estimation of the density
$f.$ Notice that the L\'evy\index{triplet!L\'evy} triplet of the process $X$ is given by $(0,1,\nu),$ where the L\'evy
measure $\nu(dx)$ equals $\lambda f(x)dx,$ see \citet[Example~8.5]{sato}. Since the L\'evy triplet provides a unique
means for characterisation of any L\'evy process\index{process!L\'evy}, see e.g.\ \citet[Chapter~2]{sato}, inference on
the law of $X$ can be reduced to inference on $\nu.$ Most of the existing literature dealing with estimation problems
for L\'evy processes\index{process!L\'evy} is concerned with parametric estimation of the L\'evy measure, see e.g.\
\citet{akritas1} and \citet{akritas2}, where a fairly general setting is considered. There are relatively few papers
that study nonparametric inference procedures for L\'evy processes\index{process!L\'evy}, and the majority of them
assume that high frequency data\index{data!high frequency} are available, i.e.\ either a L\'evy
process\index{process!L\'evy} is observed continuously over a time interval $[0,T]$ with $T\rightarrow\infty,$ or it is
observed at equidistant time points $\Delta_n,\ldots,n\Delta_n$ and $\lim_{n\rightarrow \infty}\Delta_n=0,$
$\lim_{n\rightarrow\infty}n\Delta_n=\infty,$ see e.g.\ \citet{rubin}, \citet{basawa3} and \citet{figueroa}.
%Other schemes with high frequency observations are also possible.
On the other hand, high frequency data\index{data!high frequency} are not always available and it is interesting to
study estimation problems for this case as well. In the particular context of a compound Poisson\index{process!compound
Poisson} process we mention \citet{bu,bugr} and \citet{gug}, where given a sample $Y_1,\ldots,Y_n$ from a compound
Poisson\index{process!compound Poisson} process $Y=(Y_t)_{t\geq 0},$ nonparametric estimators for the jump size
distribution function $F$ (see \citet{bu,bugr}) and its density $f$ (see \citet{gug}) are proposed and their
asymptotics are studied as $n\rightarrow \infty.$ This problem is referred to as decompounding. The process
$X_t=Y_t+Z_t$ constitutes a generalisation of the compound Poisson model considered in \citet{bu,bugr} and \citet{gug}
and is related to Merton's jump-diffusion model of an asset price, see \citet{merton}. Since $Z$ is a
Brownian\index{motion!Brownian} motion, it is natural to call the estimation problem of $f$ decompounding under
Gaussian noise. Figures \ref{BMPath}--\ref{BMwithjumpsdiscretised} provide an indication of the difficulty of the
problem. Figure \vref{BMPath} gives a typical path of the Brownian\index{motion!Brownian} motion, while Figure
\ref{BMwithjumps} gives a path of the process $X.$
\begin{figure}[htb]
\setlength{\unitlength}{1cm}
%\begin{center}
\begin{minipage}{6cm}
\begin{picture}(5.5,4.0)
\epsfxsize=5.5cm\epsfysize=4cm\epsfbox{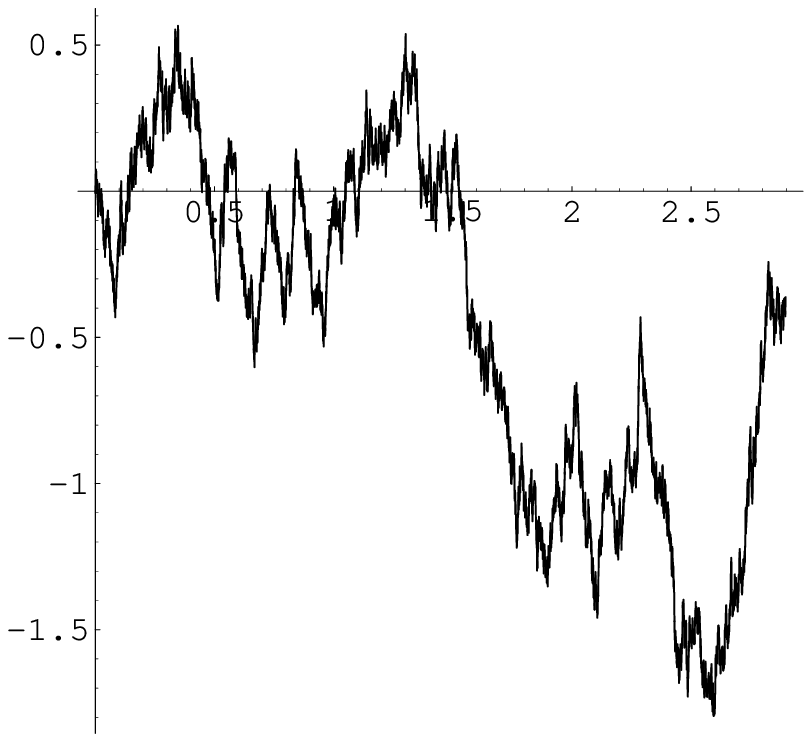}
\end{picture}
\par
\caption{\label{BMPath} A typical path of the Brownian motion.}
\end{minipage}
%\end{center}
\begin{minipage}{6cm}
\begin{picture}(5.5,4.0)
\epsfxsize=5.5cm\epsfysize=4cm\epsfbox{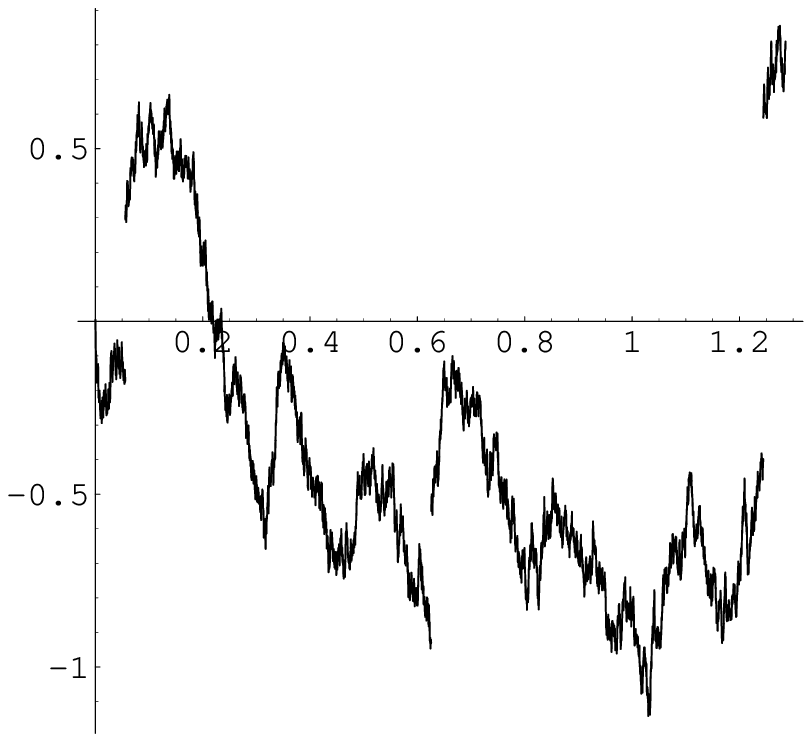}
\end{picture}
\par
\caption{\label{BMwithjumps} A typical path of the process $X=(X_t)_{t\geq 0}.$}
\end{minipage}
\end{figure}
The difference is at once clear when $X$ is observed continuously. If this is the case, then one can see all the jumps
in the path of $X$ and the problem of estimating $f$ is relatively easy, as no decompounding is involved. On the other
hand Figures \ref{BMPathdiscretised} and \vref{BMwithjumpsdiscretised} provide discretised versions of the typical
paths of the Brownian\index{motion!Brownian} motion $Z$ and the process $X.$
\begin{figure}[htb]
\setlength{\unitlength}{1cm}
%\begin{center}
\begin{minipage}{6cm}
\begin{picture}(5.5,4.0)
\epsfxsize=5.5cm\epsfysize=4cm\epsfbox{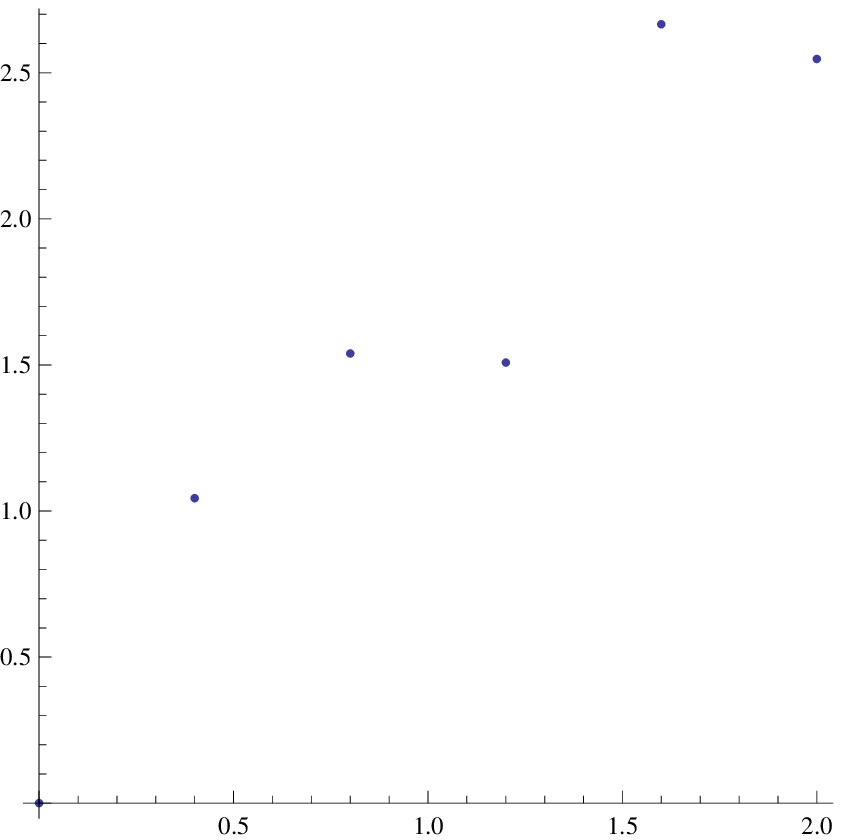}
\end{picture}
\par
\caption{\label{BMPathdiscretised} A discretised path of the Brownian motion.}
\end{minipage}
%\end{center}
\begin{minipage}{6cm}
\begin{picture}(5.5,4.0)
\epsfxsize=5.5cm\epsfysize=4cm\epsfbox{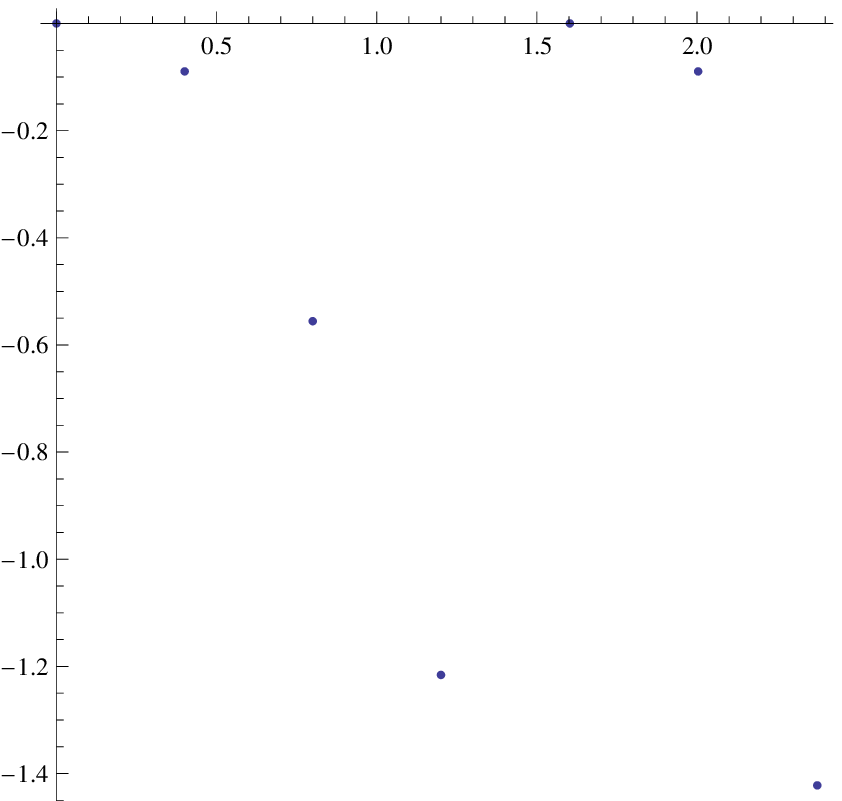}
\end{picture}
\par
\caption{\label{BMwithjumpsdiscretised} A discretised path of the process $X=(X_t)_{t\geq 0}.$}
\end{minipage}
\end{figure}
In this case both plots look similar and given the highly irregular character of Brownian paths, it is difficult to
conclude at which time instances jumps occur in the process $X$. The information on $f$ is contained in the jumps and
the impossibility to observe them makes the problem of estimation of $f$ much more difficult.

Nonparametric estimation of the L\'evy measure of a more general L\'evy process\index{process!L\'evy} than $X$ based on
low frequency observations\index{observations!low frequency} was considered in \citet{watteel} and \citet{reiss}.
However these authors treat the case of estimation of the L\'evy measure only (or of the canonical function $K$ in case
of \citet{watteel}) and not of its density. Moreover, they study the proposed estimators under the strong moment
condition $\ex[|X_1|^{4+\delta}]<\infty,$ where $\delta$ is some strictly positive number. This condition automatically
excludes distributions with heavy tails. We refer to those papers for additional details.

Using the stationary independent increments property of a L\'evy process\index{process!L\'evy}, we see that the problem
of estimating $f$ from a discrete time sample from $X$ is equivalent to the following: let $X_1,\ldots, X_n$ be i.i.d.\
observations\index{observations!i.i.d.}, where $X_i=Y_i+Z_i,$ and $Y_i$ and $Z_i$ are independent. Assume that the
unobservable $Y$'s are distributed as a random variable
\begin{equation*}
Y=\sum_{j=1}^{N(\lambda)}W_j,
\end{equation*}
where $N(\lambda)$ has a Poisson distribution with parameter $\lambda$ and where the $W$'s are i.i.d.\ with
distribution function $F$ and density $f$ and where by convention a sum over the empty set is understood to be zero.
Thus we assume that $Y$ is a Poisson sum of i.i.d.\ $W$'s. Furthermore, let the random variables $Z_i$ have a
standard\index{distribution!standard normal} normal distribution. Assume that $\lambda$ is known. The estimation
problem is as follows: based on the sample $X_1,\ldots ,X_n,$ construct an estimator of $f.$

In this context one might also think of the $X$'s as of measurements of the realisations $Y$'s of some quantity of
interest, which are corrupted by the noise $Z.$ This way we are in the classical 'signal' plus 'noise' setting and the
problem at hand is then related to the deconvolution\index{deconvolution} problem, see e.g.\ \citet{jones} for an
overview, and in particular to its generalisation to the case of an atomic\index{deconvolution!atomic} deconvolution,
see \citet{gug2}.

The method that will be used to construct an estimator for $f$ is based on Fourier inversion and is similar in spirit
to the use of kernel estimators in deconvolution\index{deconvolution} problems, as well as our approach in
\citet{gug,gug2}.
%For additional information and survey of the existing literature we refer to the same chapter.

Let $\phi_X,$ $\phi_Y,$ $\phi_Z$ and $\phi_f$ denote the characteristic functions of $X,Y,Z,$ and $W,$ respectively.
Then by independence of $Y$ and $Z$ and by the fact that
\begin{equation*}
\phi_Y(t)=e^{-\lambda+\lambda\phi_f(t)},
\end{equation*}
see e.g.\ \citet[Chapter~1,~Section~4]{sato}, we have
\begin{equation}
\label{deconv-phix} \phi_X(t)=\phi_Y(t)\phi_Z(t)=e^{-\lambda+\lambda\phi_f(t)}e^{-t^2/2},
\end{equation}
and therefore
\begin{equation}
\label{deconv-inv} e^{\lambda\phi_f(t)}=\frac{\phi_X(t)}{e^{-\lambda}e^{-t^2/2}}.
\end{equation}
Notice that $P(Y=0)=e^{-\lambda}.$ Inverting \eqref{deconv-inv}, we get
\begin{equation*}
\phi_f(t)=\frac{1}{\lambda}\operatorname{Log}\left(\frac{\phi_X(t)}{e^{-\lambda}e^{-t^2/2}}\right).
\end{equation*}
Here $\operatorname{Log}$ denotes the \textit{distinguished logarithm}\index{logarithm!distinguished}, called so due to
the similarity to the distinguished logarithm\index{logarithm!distinguished} as constructed e.g.\ in
\citet[Lemma~1,~p.~413]{chow}, \citet[Theorem~7.6.2]{chung}, \citet{fink} and \citet[Lemma~7.6]{sato}. The difference
is that in our case the function $\operatorname{exp}(\lambda\phi_f(t))$ equals $e^{\lambda}$ at $t=0$ and not $1.$ The
distinguished logarithm of $\operatorname{exp}(\lambda\phi_f(t))$ in our case can be {\it defined} as
\begin{equation}
\label{distlog} \lambda+\operatorname{Log}\left(\frac{\phi_X(t)}{e^{-t^2/2}}\right),
\end{equation}
where $\Log$ denotes the distinguished logarithm as constructed e.g.\ in \citet[Theorem~7.6.2]{chung}, or it can be
constructed directly.
\begin{rem}
Notice that in general the distinguished logarithm\index{logarithm!distinguished} of the non-vanishing characteristic
function $\phi(t)$ cannot be reduced to the composition of the principal branch\index{logarithm!principal branch of} of
an ordinary logarithm $\log$ with $\phi.$ Consider the following trivial example: $\phi(t)=e^{it}.$ This characteristic
function satisfies the requirements of \citet[Theorem~7.6.2]{chung}, since it takes its values on the unit circle in
the complex plane and hence its distinguished logarithm\index{logarithm!distinguished} exists and is given by
$\Log(\phi(t))=it.$ On the other hand if one considers the argument of $\log(\phi(t)),$ it is easy to see that it jumps
whenever $\phi$ crosses the negative real axis, see Figure \vref{circle_argument} and compare to the argument of the
distinguished logarithm\index{logarithm!distinguished}. This fact is not surprising, given that $-1$ lies on the branch
cut of the principal branch\index{logarithm!principal branch of} of an ordinary logarithm.
\end{rem}
\begin{figure}[htb]
\setlength{\unitlength}{1cm}
\begin{center}
%\begin{minipage}{6cm}
%\begin{picture}(5.5,5.5)
%\epsfxsize=5.5cm\epsfysize=5.5cm\epsfbox{circle_colour.eps}
%\end{picture}
%\par
%\caption{\label{circle_colour} The characteristic function $\phi(t)=e^{it}.$}
%\end{minipage}
%\end{center}
%\hfill
%\begin{minipage}{6cm}
%\begin{picture}(5.5,5.5)
\epsfxsize=5.5cm\epsfysize=5.5cm\epsfbox{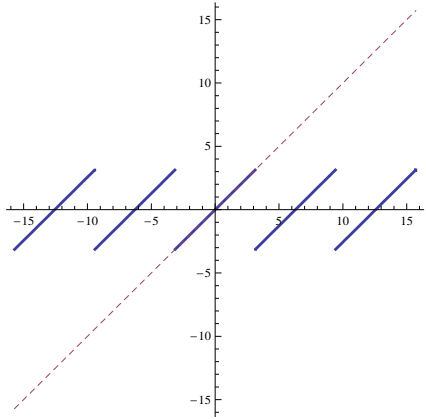}
%\end{picture}
\par
\caption{\label{circle_argument} Arguments of the principal branch of a logarithm and of the distinguished logarithm.}
\end{center}
%\end{minipage}
\end{figure}
\begin{rem}
Notice that if $\lambda<\log 2,$ the distinguished logarithm in \eqref{distlog} reduces to a composition of the
principal branch of an ordinary logarithm with $\operatorname{exp}(\lambda\phi_f(t)).$ This follows from the fact that
\begin{equation*}
\Log\left(e^{\lambda\phi_f(t)}\right)=\Log\left((e^{\lambda}-1)\phi_g(t)+1\right),
\end{equation*}
where $\phi_g(t)=\phi_{Y|N>0}(t),$ cf.\ \citet{gug}. It is immediately seen that the condition $\lambda<\log2$ will
then prevent $\exp(\lambda\phi_f(t))$ from taking values on the negative real axis, which constitutes the branch cut
for the principal branch of an ordinary logarithm.
\end{rem}

Assuming that $\phi_f$ is integrable, by Fourier inversion we obtain
\begin{equation*}
f(x)=\frac{1}{2\pi\lambda}\infint e^{-itx}\operatorname{Log}\left(\frac{\phi_X(t)}{e^{-\lambda}e^{-t^2/2}}\right)dt.
\end{equation*}
This expression will be used as the basis for construction of an estimator of $f.$ Let $\phi_{emp}$ denote the
empirical characteristic function\index{empirical characteristic function} of the sample $X_1,\ldots,X_n,$
\begin{equation*}
\phi_{emp}(t)=\frac{1}{n}\sum_{j=1}^{n}e^{itX_j}.
\end{equation*}
%see e.g.\ \citet{feuerverger} or \citet{mcdunnough} for a discussion of its applications in statistics.
Furthermore, let $w$ be a symmetric kernel\index{kernel} with Fourier transform $\phi_w$ supported on $[-1,1]$ and
nonzero there, and let $h>0$ be a bandwidth\index{bandwidth}. The density $q$ of $X$ can then be estimated by a kernel
density estimator
\begin{equation*}
q_{nh}(x)=\frac{1}{nh}\sum_{j=1}^n w\left(\frac{x-X_j}{h}\right).
\end{equation*}
Its characteristic function $\phi_{q_{nh}}(t)=\phi_{emp}(t)\phi_w(ht)$ will serve as an estimator of $\phi_X(t).$ For
those $\omega$'s from the sample space $\Omega,$ for which the distinguished logarithm\index{logarithm!distinguished}
in the integral below is well-defined, $f$ can be estimated by the following plug-in type estimator,
\begin{equation}
\label{deconvnoise-fnh} f_{nh}(x)=\frac{1}{2\pi\lambda}\int_{-1/h}^{1/h}
e^{-itx}\operatorname{Log}\left(\frac{\phi_{emp}(t)\phi_w(ht)}{e^{-\lambda}e^{-t^2/2}}\right)dt,
\end{equation}
while for those $\omega$'s, for which the distinguished logarithm\index{logarithm!distinguished} cannot be defined, we
can assign an arbitrary value to $f_{nh}(x),$ e.g.\ zero. The distinguished logarithm\index{logarithm!distinguished} in
\eqref{deconvnoise-fnh} can be defined only for those $\omega $'s for which
$\phi_{emp}(t)\phi_w(ht)e^{\lambda}e^{t^2/2}$ as a function of $t$ does not vanish on $[-1/h,1/h].$ In fact in Section
\ref{deconvnoise-results} we will prove that as $n\rightarrow\infty,$ the probability of the exceptional set where the
distinguished logarithm\index{logarithm!distinguished} is undefined, tends to zero. For technical reasons which will
become apparent in the proofs, we also need to truncate $f_{nh}(x),$ and consequently, we define the estimator of
$f(x)$ not by the expression above, but by
\begin{equation}
\label{deconvnoise-hatfnh} \hat{f}_{nh}(x)=(M_n\wedge f_{nh}(x))\vee(-M_n),
\end{equation}
where $M=(M_n)_{n\geq1}$ denotes a sequence of positive numbers converging to infinity at a suitable rate to be
specified below.

Concluding this section, we state conditions on the density $f,$ the bandwidth\index{bandwidth} $h$ and the truncating
sequence $M$ that will be used in Section \ref{deconvnoise-results}.
\begin{cnd}
\label{conditionf} Let the density $f$ be such that $\phi_f$ is integrable.
\end{cnd}
\begin{cnd}
\label{conditionw} Let the kernel\index{kernel} $w$ be the sinc kernel\index{kernel!sinc}, $w(x)=(\sin x)/\pi x.$
\end{cnd}
The Fourier transform of the sinc kernel\index{kernel!sinc} is given by $\phi_w(t)=1_{[-1,1]}(t).$
%The sinc
%kernel\index{kernel!sinc} and its Fourier transform are plotted in Figures \ref{sinc_kernel} and \vref{sinc_kernel_ch}.
%\begin{figure}[htb]
%\setlength{\unitlength}{1cm}
%\begin{minipage}{6cm}
%\begin{picture}(5.5,4.0)
%\epsfxsize=5.5cm\epsfysize=4cm\epsfbox{sinc_kernel_colour.eps}
%\end{picture}
%\par
%\caption{\label{sinc_kernel} The sinc kernel.}
%\end{minipage}
%\hfill
%\begin{minipage}{6cm}
%\begin{picture}(5.5,4.0)
%\epsfxsize=5.5cm\epsfysize=4cm\epsfbox{sinc_kernel_ch_colour.eps}
%\end{picture}
%\par
%\caption{\label{sinc_kernel_ch} The Fourier transform of the sinc kernel.}
%\end{minipage}
%\end{figure}
The sinc kernel\index{kernel!sinc} has been used successfully in kernel density estimation since a long time, see e.g.\
\citet{davis1,davis2}. It is the simplest example among the so-called superkernels\index{superkernel}, i.e.\ kernels
the Fourier transforms of which are identically $1$ in some open neighbourhood of zero. For more information on the
latter class of kernels\index{kernel} we refer e.g.\ to \citet{dev}, \citet{devr} or \citet{devroye}. An attractive
feature of the sinc kernel\index{kernel!sinc} in ordinary kernel density estimation is that it is asymptotically
optimal when one selects the mean square\index{error!mean square} error or the mean integrated square
error\index{error!mean integrated square (MISE)} as the criterion for the performance of an estimator. Notice that the
sinc kernel\index{kernel!sinc} is not Lebesgue integrable, but its square is.
\begin{cnd}
\label{conditionh} Let the bandwidth\index{bandwidth} $h_n$ depend on $n$ and be such that $h_n\sim (\log n)^{-\beta},$
where $\beta<1/2.$
\end{cnd}
Notice that this condition implies $ne^{-1/h_n^2}\rightarrow\infty.$ In the sequel we will suppress the subscript used
to demonstrate the dependence of $h$ on $n,$ since no ambiguity will arise.
\begin{cnd}
\label{conditionm} Let the truncating sequence $M=(M_n)_{n\geq 1}$ be such that $M_n=C\log n,$ where $C>0$ is some
constant.
%\marginpar{Something different?}
\end{cnd}

The rest of the paper is organised as follows: in Section \ref{deconvnoise-results} we show that with probability
approaching $1$ as $n\rightarrow\infty,$ the distinguished logarithm\index{logarithm!distinguished} in
\eqref{deconvnoise-fnh} is well-defined and subsequently we state the main result of the paper concerning the
asymptotic normality\index{asymptotic normality} of $\hat{f}_{nh}$ at a fixed point $x.$ The section is concluded with
a brief discussion of the obtained results. Section \ref{deconvnoise-simulation} contains a simulation example.
%Section \ref{deconvnoise-simul} contains a simulated example.
All the proofs are collected in Section \ref{deconvnoise-proofs}.

\section{Main result}
\label{deconvnoise-results} We first establish that with probability tending to $1$ as $n\rightarrow\infty,$ the
distinguished logarithm\index{logarithm!distinguished} in \eqref{deconvnoise-fnh} is well-defined. Thus our goal is to
find a set $B_{nh},$ such that on this set the distinguished logarithm\index{logarithm!distinguished} might be
undefined, while on the set $B_{nh}^c$ it is well-defined. Fix $\omega$ from the sample space $\Omega$ and consider the
quantity
\begin{equation}
\label{deconvexpb1} \sup_{t\in\ \left [-\frac{1}{h},\frac{1}{h}\right
]}\left|\frac{\phi_{emp}(t)}{e^{-\lambda}e^{-t^2/2}}-\frac{\phi_{X}(t)}{e^{-\lambda}e^{-t^2/2}}\right|.
\end{equation}
Now suppose that there exists a small number $\delta,$ such that
\begin{equation*}
\sup_{t\in\ \left [-\frac{1}{h},\frac{1}{h}\right
]}e^{1/(2h^2)}\left|\frac{\phi_{emp}(t)}{e^{-\lambda}}-\frac{\phi_{X}(t)}{e^{-\lambda}}\right|\leq\delta.
\end{equation*}
Obviously this implies that \eqref{deconvexpb1} is less than $\delta$. If $\delta$ is small enough, then since
$\phi_X(t)e^{\lambda}e^{t^2/2}=\operatorname{exp}[\lambda\phi_f(t)]$ is bounded away from zero, also
$\phi_{emp}(t)e^{\lambda}e^{t^2/2}$ will be bounded away from zero on $[-1/h,1/h].$ From this it follows that on this
interval one can define the distinguished logarithm\index{logarithm!distinguished} of
$\phi_{emp}(t)e^{\lambda}e^{t^2/2}.$ This simple observation shows that on the set
\begin{equation*}
B_{nh}^{c}=\left\{\omega: \sup_{t\in\ \left [-\frac{1}{h},\frac{1}{h}\right
]}e^{{1}/{(2h^2)}}\left|\frac{\phi_{emp}(t)}{e^{-\lambda}}-\frac{\phi_{X}(t)}{e^{-\lambda}}\right|\leq\delta\right\}
\end{equation*}
the distinguished logarithm\index{logarithm!distinguished} will be well-defined for $\delta$ sufficiently small. Thus,
what remains to be done is to prove that the probability of the complement of this set converges to zero as
$n\rightarrow\infty.$ To this end we will make use of the following theorem from \citet{devroye-chiu}.
\begin{thm}
\label{devroye} Let $X$ be a random variable with characteristic function $\phi$ and finite first moment, and let
$\phi_n$ be the empirical characteristic function\index{empirical characteristic function} of the i.i.d.\ sample
$X_1,\ldots,X_n$ drawn from $X.$ Then, for $\alpha$ and $\beta,$ possibly dependent upon $n,$
\begin{equation}
\label{st} \operatorname{P}\left(\sup_{|t|<\alpha} |\phi_n(t)-\phi(t)|>\beta\right)\leq 4
\left(1+\frac{8\alpha\ex[|X|]}{\beta}\right)e^{-n\beta^2/72}+o(1),
\end{equation}
where the $o(1)$ term is uniform over all $\alpha$ and $\beta.$
\end{thm}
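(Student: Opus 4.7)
The plan is a discretisation argument: reduce the continuous supremum on $[-\alpha,\alpha]$ to a maximum over a finite grid using Lipschitz control, and then bound the grid maximum via a Hoeffding-type concentration inequality together with a union bound.

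First I would exploit the Lipschitz property of characteristic functions. Because $\ex[|X|]<\infty$, the population characteristic function $\phi$ is Lipschitz with constant $\ex[|X|]$, and similarly $\phi_n$ is almost surely Lipschitz with random constant $\bar U_n := n^{-1}\sum_{j=1}^n|X_j|$. I would introduce the ``good event'' $G_n=\{\bar U_n\le 2\ex[|X|]\}$. By the weak law of large numbers $\operatorname{P}(G_n^c)=o(1)$, and crucially this bound does not depend on $\alpha$ or $\beta$, which is what supplies the uniform $o(1)$ term in \eqref{st}.

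Next, choose an equispaced grid $t_0,\ldots,t_N$ in $[-\alpha,\alpha]$ with spacing $\eta$ of order $\beta/\ex[|X|]$, so that $N\le 1+2\alpha/\eta$ is of order $1+\alpha\ex[|X|]/\beta$. On $G_n$ the two Lipschitz bounds yield, for any $t\in[-\alpha,\alpha]$ and its nearest grid point $t_k$,
\[
|\phi_n(t)-\phi(t)|\le |\phi_n(t_k)-\phi(t_k)|+\eta(\bar U_n+\ex[|X|])\le |\phi_n(t_k)-\phi(t_k)|+3\eta\ex[|X|].
\]
Tuning $\eta$ so that the last term is at most half of $\beta$ shows that, on $G_n$, the event $\{\sup_{|t|<\alpha}|\phi_n(t)-\phi(t)|>\beta\}$ is contained in $\{\max_k|\phi_n(t_k)-\phi(t_k)|>\beta/2\}$. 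For each fixed grid point $t_k$ the summands $\operatorname{Re}(e^{it_kX_j})$ and $\operatorname{Im}(e^{it_kX_j})$ lie in $[-1,1]$, so Hoeffding's inequality applied separately to the real and imaginary parts yields $\operatorname{P}(|\phi_n(t_k)-\phi(t_k)|>\beta/2)\le 4\exp(-n\beta^2/c)$ for an absolute constant $c$. A union bound over the $N$ grid points then produces a global bound of the form $4(1+C\alpha\ex[|X|]/\beta)\exp(-n\beta^2/c)+\operatorname{P}(G_n^c)$, which has the same shape as the right-hand side of \eqref{st}.

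The main obstacle is not conceptual but arithmetic: splitting the budget $\beta$ among the population-Lipschitz, empirical-Lipschitz and grid-maximum contributions so that the prefactors come out exactly as $4$ and $1+8\alpha\ex[|X|]/\beta$ and the exponent exactly as $-n\beta^2/72$. One must also verify that the uniformity of the $o(1)$ term is preserved, which is ensured here because $G_n$ is chosen independently of $\alpha$ and $\beta$ and thus its failure probability depends only on the law of $X$ and on $n$.
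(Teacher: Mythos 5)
Your discretisation-plus-Hoeffding argument, with the $o(1)$ term supplied by the law of large numbers applied to the empirical Lipschitz constant, is exactly the proof of this result in \citet{devroye-chiu}: the paper does not reprove the theorem but imports it from that reference, and its accompanying Remark identifies the $o(1)$ term with \eqref{o1term}, i.e.\ with the failure probability of a good event of precisely your type (threshold $4/3$ in place of your $2$). Apart from the constant bookkeeping you defer --- e.g.\ allotting $2\beta/3$ to the grid approximation and $\beta/6$ to each of the real and imaginary parts via $|z|\leq|\operatorname{Re}z|+|\operatorname{Im}z|$, which is what yields the factors $8$ and $72$ in \eqref{st} --- your proposal is correct and follows the same route as the cited proof.
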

\begin{rem} In our results we need additional information on the $o(1)$ term in \eqref{st}. It follows from the proof
of Theorem \ref{devroye} that it is bounded by
\begin{equation}
\label{o1term}
P\left(\left|\frac{1}{n}\sum_{j=1}^nX_j\right|\geq\frac{4}{3}\ex[|X_1|]\right),
\end{equation}
see \citet{devroye-chiu}. Since the $X$'s are not bounded, it is not possible to apply Hoeffding's
inequality\index{inequality!Hoeffding's}, see \citet{hoeffding}, to show that this probability is exponentially small.
At the same time, verification of the moment conditions needed for Bernstein's inequality\index{inequality!Bernstein's}
to hold is difficult in our case and might require strong conditions on $Y.$ Therefore we opt for an unsophisticated
application of Chebyshev's inequality\index{inequality!Chebyshev's} to bound this probability.
\end{rem}

The following proposition follows from Theorem \ref{devroye}.
\begin{prop}
\label{deconvnoise-log} Assume Conditions \ref{conditionw} and \ref{conditionh} and let $\ex[|X|]<\infty.$ Then the distinguished
logarithm\index{logarithm!distinguished} in \eqref{deconvnoise-fnh} is well-defined with probability tending to $1$ as
$n\rightarrow\infty.$ Moreover, if $\ex[|X|^{\rho}]<\infty$ for $1<\rho<2,$ then
\begin{equation*}
\operatorname{P}(B_{nh})=O\left(\frac{1}{n^{\rho-1}}\right),
\end{equation*}
and if $\ex[|X|^{\rho}]<\infty$ for $\rho\geq2,$ then
\begin{equation*}
\operatorname{P}(B_{nh})=O\left(\frac{1}{n^{\rho/2}}\right).
\end{equation*}
\end{prop}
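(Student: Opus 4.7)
My plan is to apply Theorem \ref{devroye} directly to the event $B_{nh}$, then split the resulting bound into a leading exponential piece (which Condition \ref{conditionh} makes negligible) and a remainder that carries the moment-driven rate.

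\textbf{Matching the event.} By rearranging the definition of $B_{nh}^{c}$, one sees immediately that
\begin{equation*}
B_{nh}=\left\{\sup_{|t|\leq 1/h}\,|\phi_{emp}(t)-\phi_X(t)|>\delta e^{-\lambda}e^{-1/(2h^2)}\right\}.
\end{equation*}
Thus I would invoke Theorem \ref{devroye} with $\alpha=1/h$ and $\beta=\delta e^{-\lambda}e^{-1/(2h^2)}$. This gives
\begin{equation*}
\operatorname{P}(B_{nh})\leq 4\left(1+\frac{8 e^{\lambda}e^{1/(2h^2)}\ex[|X_1|]}{\delta h}\right)\exp\left(-\frac{n\delta^2 e^{-2\lambda}e^{-1/h^2}}{72}\right)+o(1).
\end{equation*}

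\textbf{Killing the exponential piece.} Condition \ref{conditionh} gives $h\sim(\log n)^{-\beta}$ with $\beta<1/2$, so $1/h^2\sim(\log n)^{2\beta}=o(\log n)$. Consequently $ne^{-1/h^2}=\exp(\log n - 1/h^2)$ tends to infinity faster than any fixed power of $\log n$, so the exponential factor $\exp(-cne^{-1/h^2})$ decays faster than any polynomial in $n$. The prefactor only grows like $e^{1/(2h^2)}/h$, which is subpolynomial in $n$, so the entire first term is $O(n^{-k})$ for every $k>0$ and can be absorbed into the stated bounds.

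\textbf{Handling the $o(1)$ remainder.} By the remark following Theorem \ref{devroye}, the $o(1)$ term is bounded by $\operatorname{P}(|\overline{X}_n|\geq \tfrac{4}{3}\ex[|X_1|])$. Setting $\mu=\ex[X_1]$ and noting $|\mu|\leq \ex[|X_1|]$, this event forces $|\overline{X}_n-\mu|\geq\tfrac{1}{3}\ex[|X_1|]$ (the degenerate case $\ex[|X_1|]=0$ is trivial). Under $\ex[|X|]<\infty$, the WLLN alone gives $o(1)$, so $\operatorname{P}(B_{nh})\to 0$. For the quantitative rates I would apply Markov's inequality to the centered mean and then use classical moment inequalities for sums of i.i.d.\ random variables: the von Bahr--Esseen inequality for $1<\rho<2$, yielding $\ex|\overline{X}_n-\mu|^\rho=O(n^{-(\rho-1)})$, and Rosenthal's (or Marcinkiewicz--Zygmund) inequality for $\rho\geq 2$, yielding $\ex|\overline{X}_n-\mu|^\rho=O(n^{-\rho/2})$. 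Dividing by the constant $(\ex[|X_1|]/3)^\rho$ and combining with Step 2 gives the two rates stated in the proposition.

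\textbf{Main obstacle.} The delicate point is the interaction between the exponentially small $\beta$ and the exponential factor in Theorem \ref{devroye}: had we allowed $\beta\geq 1/2$ in Condition \ref{conditionh}, the product $n\beta^2\propto ne^{-1/h^2}$ could fail to dominate $e^{1/(2h^2)}/h$, and the uniform deviation bound would not go to zero. The scaling $\beta<1/2$ is precisely what forces $1/h^2=o(\log n)$ and thereby makes the leading term of the bound negligible against the moment-driven remainder, leaving the Chebyshev-type bound in Step 3 as the effective rate.
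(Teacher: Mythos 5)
Your proposal is correct and follows essentially the same route as the paper: Theorem \ref{devroye} with $\alpha=1/h$, $\beta=\delta e^{-\lambda}e^{-1/(2h^2)}$, negligibility of the exponential term under Condition \ref{conditionh}, and a Markov/Chebyshev bound on the $o(1)$ remainder via moment inequalities for i.i.d.\ sums. The only cosmetic difference is that for $\rho\geq 2$ you invoke Rosenthal/Marcinkiewicz--Zygmund where the paper cites Dharmadhikari and Jogdeo, which serves the identical purpose.
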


The main result of the paper concerns the asymptotic normality\index{asymptotic normality} of $\hat{f}_{nh}(x)$ at a
fixed point $x.$ The following theorem holds true.
\begin{thm}
\label{deconvnoise-an1} Suppose that $\lambda$ is known. Let the estimator $\hat{f}_{nh}(x)$ be defined as in
\eqref{deconvnoise-hatfnh}, and assume that Conditions \ref{conditionf}--\ref{conditionm} hold. Furthermore, let
$|x|^{\rho}f(x)$ be integrable with $\rho>3/2.$ Then
\begin{equation*}
\frac{\sqrt{n}}{he^{{1}/{(2h^2)}}}(\hat{f}_{nh}(x)-\operatorname{E}[\hat{f}_{nh}(x)])\convd
{\mathcal{N}}\left(0,\frac{e^{2\lambda}}{2\pi^2\lambda^2}\right)
\end{equation*}
as $n\rightarrow\infty.$
\end{thm}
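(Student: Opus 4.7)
The plan is to condition on the event $B_{nh}^c$ of Proposition \ref{deconvnoise-log} (on which the distinguished logarithm in \eqref{deconvnoise-fnh} is well defined), to linearise that logarithm so that the leading stochastic part becomes a centred sum of i.i.d.\ random variables, and to apply the Lindeberg CLT. Since $P(B_{nh})=O(n^{-(\rho-1)})=o(n^{-1/2})$ under the moment condition $\rho>3/2$, and on $B_{nh}^c$ the bound $|f_{nh}(x)|=O(h^{-1}\log n)=o(M_n)$ holds, both the exceptional event and the truncation in \eqref{deconvnoise-hatfnh} contribute negligibly at the scale $he^{1/(2h^2)}/\sqrt{n}$. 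Under Condition \ref{conditionw}, $\phi_w(ht)\equiv 1$ on $[-1/h,1/h]$, and writing
\begin{equation*}
\operatorname{Log}\!\left(\frac{\phi_{emp}(t)}{e^{-\lambda}e^{-t^2/2}}\right)=\lambda\phi_f(t)+\operatorname{Log}(1+U_n(t)),\qquad U_n(t):=\frac{\phi_{emp}(t)-\phi_X(t)}{\phi_X(t)},
\end{equation*}
a Taylor expansion of $\operatorname{Log}(1+u)$ produces the decomposition $f_{nh}(x)=A_n(x)+L_n(x)+R_n(x)$, with $A_n(x)$ deterministic, $L_n(x)$ linear in $U_n$, and $R_n(x)$ collecting the quadratic and higher order terms. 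Centering removes $A_n(x)$, and $L_n(x)=n^{-1}\sum_{j=1}^n\xi_{j,n}$ with
\begin{equation*}
\xi_{j,n}:=\frac{1}{2\pi\lambda}\int_{-1/h}^{1/h}e^{-itx}\,\frac{e^{itX_j}-\phi_X(t)}{\phi_X(t)}\,dt
\end{equation*}
is an average of i.i.d.\ centred random variables.

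The variance is computed by Fubini. Using $|\phi_X(t)|^{-2}=e^{2\lambda-2\lambda\operatorname{Re}\phi_f(t)}e^{t^2}$ together with the Riemann--Lebesgue fact $\phi_f(t)\to 0$ as $|t|\to\infty$, one obtains
\begin{equation*}
\operatorname{Var}(\xi_{1,n})=\frac{1}{(2\pi\lambda)^2}\int_{-1/h}^{1/h}\!\!\int_{-1/h}^{1/h}e^{i(s-t)x}\!\left[\frac{\phi_X(t-s)}{\phi_X(t)\overline{\phi_X(s)}}-1\right]dt\,ds,
\end{equation*}
whose integrand behaves like $e^{\lambda}e^{ts}$ near the corners $(\pm 1/h,\pm 1/h)$. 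The substitution $u=ht$, $v=hs$ and a Laplace-type expansion then give
\begin{equation*}
\operatorname{Var}(\xi_{1,n})\sim\frac{e^{2\lambda}}{2\pi^2\lambda^2}\,h^2e^{1/h^2},
\end{equation*}
which matches the normalisation of the theorem. Since the deterministic bound $|\xi_{1,n}|\leq Che^{1/(2h^2)}$ is available, $|\xi_{1,n}|/\sqrt{n\operatorname{Var}(\xi_{1,n})}=O(n^{-1/2})$, and the Lindeberg truncated expectations vanish identically for large $n$. Hence $\sqrt{n}\,h^{-1}e^{-1/(2h^2)}L_n(x)\convd\mathcal{N}(0,e^{2\lambda}/(2\pi^2\lambda^2))$.

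The remaining step is to show that the normalised remainder $\sqrt{n}\,h^{-1}e^{-1/(2h^2)}(R_n(x)-E[R_n(x)])$ and the truncation/exceptional-set corrections are $o_P(1)$, after which Slutsky's lemma yields the theorem. Controlling $R_n$ is the main technical obstacle. The naive pointwise bound $|\operatorname{Log}(1+U_n)-U_n|\leq|U_n|^2$ injects the weight $|\phi_X(t)|^{-2}\sim e^{t^2}$ in the integrand, producing an $e^{2/h^2}$ factor at the edge --- one order of $e^{1/h^2}$ larger than the target scale. The resolution is to keep the expectation inside the double integral and exploit the independence of the summands constituting $\phi_{emp}$: a Fubini/second-moment computation on $E[U_n(t)\overline{U_n(s)}]$ trades one factor of $e^{1/h^2}$ for a factor of $1/n$, and Condition \ref{conditionh}, via $ne^{-1/h^2}\to\infty$, then renders the remainder of strictly smaller order than $he^{1/(2h^2)}/\sqrt{n}$. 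Matching the threshold $\delta$ defining $B_{nh}^c$ against the resulting estimates, and invoking the $\rho>3/2$ bound on $P(B_{nh})$, completes the argument.
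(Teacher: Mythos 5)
Your proposal is correct in outline and shares the paper's skeleton --- the exceptional set $B_{nh}^c$ on which the distinguished logarithm is defined, removal of the truncation there, the linearisation $\operatorname{Log}(1+U_n)=U_n+O(|U_n|^2)$, and a second-moment bound for the quadratic remainder in which the expectation is kept inside the integral so that $\ex|\phi_{emp}(t)-\phi_X(t)|^2=O(1/n)$ trades against the $e^{t^2}$ weight; that last step is exactly the paper's treatment of $R_{nh}$ via \eqref{rnhbound} and Parseval. Where you genuinely diverge is the central limit step: the paper factors $1/\phi_X(t)=e^{\lambda}e^{t^2/2}e^{-\lambda\phi_f(t)}$, kills the $(e^{-\lambda\phi_f(t)}-1)$ correction by a one-dimensional Laplace/dominated-convergence argument, and then invokes Lemma \ref{Bertlemma} (a variant of van Es--Uh, proved through the auxiliary variables $V_{nj}$, the limit $(hX,Y_{h,j})\convd(0,U)$ and Lyapunov), whereas you keep the full kernel $1/\phi_X(t)$, view the linear part as an i.i.d.\ average of the $\xi_{j,n}$, obtain $\var(\xi_{1,n})$ from a two-dimensional Laplace (corner) analysis of the covariance double integral, and apply Lindeberg directly with the crude bound $|\xi_{1,n}|\le Ch^{-1}e^{1/(2h^2)}$. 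Your route is self-contained and avoids the van Es--Uh machinery, at the price of having to justify the corner asymptotics uniformly (on the effective $O(h)\times O(h)$ corner regions one needs $\phi_f(t),\phi_f(s)\to0$ and $\phi_f(t-s)\to1$); the paper outsources precisely this delicacy to the cited lemma. Two slips to repair in the write-up, neither fatal: near the corners with $ts>0$ the integrand behaves like $e^{2\lambda}e^{ts}$, not $e^{\lambda}e^{ts}$, since $\phi_f(t-s)\to1$ there --- it is the $e^{2\lambda}$ value that produces your (correct) claim $\var(\xi_{1,n})\sim e^{2\lambda}h^2e^{1/h^2}/(2\pi^2\lambda^2)$; and the assertion $|f_{nh}(x)|=O(h^{-1}\log n)=o(M_n)$ is inconsistent with $M_n=C\log n$, but your own linearisation on $B_{nh}^c$ gives the stronger bound $|f_{nh}(x)|=O(h^{-1})=o(M_n)$, since $|\lambda\phi_f(t)|\le\lambda$ and $|\operatorname{Log}(1+U_n(t))|$ is bounded when $|U_n(t)|\le e^{\lambda}\delta<1/2$; this replaces the paper's winding-number/rectifiability argument leading to \eqref{hateq}.
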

\begin{rem}
Notice that the integrability of $|x|^{\rho}f(x)$ implies $\ex[|X|^{\rho}]<\infty,$ see e.g.\
\citet[Corollary~25.8]{sato}. Thus the conditions of the Theorem \ref{deconvnoise-an1} cover a large class of
distributions with heavy tails.
\end{rem}
\begin{rem}
From Theorem \ref{deconvnoise-an1} it follows, that in order to get a consistent estimator, $nh^{-1}e^{-1/h^2}$ has to
diverge to infinity. This means that the bandwidth\index{bandwidth} $h$ has to be fairly large, i.e.\ of order $(\log
n)^{-\beta},$ where $\beta\leq 1/2,$ thus resulting in a slow, logarithmic rate of convergence\index{convergence
rate!logarithmic} of $\hat{f}_{nh}(x).$ This is in sharp contrast with the ordinary decompounding case, where the
convergence rate is polynomial\index{convergence rate!polynomial}, see Section \citet{gug}. On the other hand, the
convergence rate\index{convergence rate} of $\hat{f}_{nh}(x)$ is similar to that in the ordinary
deconvolution\index{deconvolution}, as well as the deconvolution for an atomic distribution\index{distribution!atomic},
when the error distribution is assumed to be supersmooth, see e.g.\ \citet{fan1} and \citet{gug2}. This fact should not
come as a surprise, due to the similar structure of these problems and the presence of Gaussian noise in our model. We
also mention that in a recent preprint \citet{reiss}, under some conditions on the L\'evy measure $\nu,$ obtained
similar logarithmic lower bounds for estimation from low frequency\index{observations!low frequency} observations of
the L\'evy measure $\nu$ of a general L\'evy process\index{process!L\'evy} with a Brownian component.
\end{rem}
%\begin{rem}
%In order to establish asymptotic normality in Theorem \ref{deconvnoise-an1}, the randomised normalisation is not needed. This also
%has its counterpart in deconvolution problems, see \citet{vanes3} and
%\citet{gug2}. Thus the asymptotic behaviour of the
%estimator $\hat{f}_{nh}(x)$ is rather similar to that of an
%ordinary deconvolution density estimator for the Gaussian error case.
%\end{rem}
%\begin{rem}
%We conjecture that the requirement $\ex[X^2]<\infty$ that appears in Theorem \ref{deconvnoise-an1} can be relaxed to
%the requirement $\ex[|X|^{\rho}]<\infty$ with $\rho>3/2.$ This observation is based on the results in \citet{vanes2}.
%\end{rem}
\begin{rem}
Using the estimator $p_{ng}$ from \citet{gug2}, an estimator of $\lambda$ can be defined as $\lambda_{ng}=-\log
p_{ng},$ of course provided that $p_{ng}$ is strictly positive. However the proof of Theorem \ref{deconvnoise-an1} for
the case of unknown $\lambda$ is a highly nontrivial task.
\end{rem}

Apart of Theorem \ref{deconvnoise-an1}, it is also interesting to study the asymptotic distribution of
\begin{equation}
\label{deconvnoise-fnhf} \frac{\sqrt{n}}{he^{{1}/{(2h^2)}}}(\hat{f}_{nh}(x)-f(x)),
\end{equation}
i.e.\ of the estimator $\hat{f}_{nh}(x)$ centred at the true density $f(x).$ After rewriting the above expression as
\begin{equation}
\begin{split}
\label{deconvnoise-anf}
\frac{\sqrt{n}}{he^{{1}/{(2h^2)}}}(\hat{f}_{nh}(x)-f(x))&=\frac{\sqrt{n}}{he^{{1}/{(2h^2)}}}(\hat{f}_{nh}(x)-\ex[\hat{f}_{nh}(x)])\\
&+\frac{\sqrt{n}}{he^{{1}/{(2h^2)}}}(\ex[\hat{f}_{nh}(x)]-f(x)),
\end{split}
\end{equation}
we see that we have to study the behaviour of the bias\index{bias} of the estimator $\hat{f}_{nh}(x),$ which is given
by $\ex[\hat{f}_{nh}(x)]-f(x).$ It will turn out that the behaviour of the bias\index{bias} depends on the tail
behaviour of the characteristic function of $f.$ For our purposes it suffices to distinguish two cases: in the first case we will assume that
$\phi_f(t)=O(e^{-|t|^{\alpha}})$ with $1<\alpha\leq2,$ and in the second case we will assume that
$\phi_f(t)=O(|t|^{-\gamma})$ as $t\rightarrow\infty$ with $\gamma>1.$ These two cases find a parallel in
deconvolution\index{deconvolution} problems, where a distinction is made between the use of supersmooth or ordinary
smooth distributions to model the error distribution, see e.g. \citet{fan1}.
\begin{prop}
\label{deconvnoise-bias} Suppose that $\lambda$ is known. Let the estimator $\hat{f}_{nh}(x)$ be defined as in
\eqref{deconvnoise-hatfnh}, and assume that Conditions \ref{conditionf}--\ref{conditionm} hold. Furthermore, let $f$
have a finite $\rho$th moment, $\rho>1.$

(i) If $\phi_f(t)=O(e^{-|t|^{\alpha}})$ as $|t|\rightarrow\infty$ for $1<\alpha\leq 2,$ then we have
\begin{equation*}
\ex[\hat{f}_{nh}(x)]-f(x)=O(h^{\alpha-1}e^{-{1}/{h^{\alpha}}})
\end{equation*}
as $n\rightarrow\infty.$

(ii) If $\phi_f(t)=O(|t|^{-\gamma})$ as $|t|\rightarrow\infty$ for $\gamma>1,$ then
\begin{equation*}
\ex[\hat{f}_{nh}(x)]-f(x)=O(h^{\gamma-1}).
\end{equation*}
as $n\rightarrow\infty.$
\end{prop}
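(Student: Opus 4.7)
The plan is to split the bias into a deterministic frequency-truncation piece and a stochastic remainder, and show that the former dominates. Since Condition \ref{conditionw} makes $\phi_w(ht) = 1_{[-1/h,1/h]}(t)$ and since $\Log(\phi_X(t)e^{\lambda}e^{t^2/2}) = \lambda\phi_f(t)$, the natural deterministic counterpart of $f_{nh}(x)$ is
\begin{equation*}
f_h(x) = \frac{1}{2\pi}\int_{-1/h}^{1/h} e^{-itx}\phi_f(t)\,dt,
\end{equation*}
and Condition \ref{conditionf} together with Fourier inversion give
\begin{equation*}
|f_h(x) - f(x)| \leq \frac{1}{\pi}\int_{1/h}^{\infty}|\phi_f(t)|\,dt.
\end{equation*}
In case (i), substituting $u = t^{\alpha}$ and invoking the standard asymptotic $\int_A^{\infty} e^{-u}u^{1/\alpha - 1}\,du \sim A^{1/\alpha - 1}e^{-A}$ as $A \to \infty$ (one integration by parts) yields $O(h^{\alpha - 1}e^{-1/h^{\alpha}})$. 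In case (ii), direct integration gives $O(h^{\gamma - 1})$.

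It remains to show that $\ex[\hat{f}_{nh}(x)] - f_h(x)$ is of strictly smaller order. Since $\hat{f}_{nh}(x) = 0$ on $B_{nh}$, I would decompose
\begin{equation*}
\ex[\hat{f}_{nh}(x)] - f_h(x) = \ex[(\hat{f}_{nh}(x) - f_{nh}(x))1_{B_{nh}^c}] + \ex[(f_{nh}(x) - f_h(x))1_{B_{nh}^c}] - f_h(x)P(B_{nh}).
\end{equation*}
The first and last terms are handled by combining a pathwise bound $|f_{nh}(x)| = O(h^{-1}e^{1/(2h^2)})$ valid on $B_{nh}^c$ (a direct consequence of the definition of $B_{nh}^c$ and of $|\phi_X(t)e^{\lambda}e^{t^2/2}| = |e^{\lambda\phi_f(t)}| \geq e^{-\lambda}$), the growth $M_n = C\log n$, and the polynomial decay of $P(B_{nh})$ from Proposition \ref{deconvnoise-log}. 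For the middle term I would write
\begin{equation*}
f_{nh}(x) - f_h(x) = \frac{1}{2\pi\lambda}\int_{-1/h}^{1/h}e^{-itx}\Log\!\left(1 + \frac{\phi_{emp}(t) - \phi_X(t)}{\phi_X(t)}\right)dt
\end{equation*}
and Taylor expand $\Log(1 + z) = z - z^2/2 + O(|z|^3)$, uniformly valid on $B_{nh}^c$. Since $\ex[\phi_{emp}(t) - \phi_X(t)] = 0$, the linear term has zero expectation up to a polynomially decaying correction from the restriction to $B_{nh}^c$ (controllable by Cauchy-Schwarz and Proposition \ref{deconvnoise-log}). The quadratic term, using $\ex|\phi_{emp}(t) - \phi_X(t)|^2 \leq 1/n$ together with $|\phi_X(t)|^{-2} \leq e^{4\lambda}e^{t^2}$ and $\int_{-1/h}^{1/h} e^{t^2}\,dt = O(he^{1/h^2})$, contributes at most $O(n^{-1}he^{1/h^2})$. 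Condition \ref{conditionh} then makes this $o(h^{\alpha-1}e^{-1/h^{\alpha}})$ in case (i) and $o(h^{\gamma-1})$ in case (ii).

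The main obstacle will be making the Taylor expansion rigorous for the distinguished logarithm on $B_{nh}^c$ --- since $\Log$ is globally multi-valued, one has to argue that the continuity of both $\phi_{emp}$ and $\phi_X$ together with their uniform closeness on $B_{nh}^c$ forces the same branch to be selected --- and then controlling the restriction-bias $\ex[(\phi_{emp}(t)-\phi_X(t))1_{B_{nh}^c}]$ at a polynomial rate in $P(B_{nh})$ so that it remains negligible with respect to the two logarithmic bias bounds sought.
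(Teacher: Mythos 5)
Your overall strategy is the same as the paper's: isolate the frequency--truncation bias $\int_{|t|>1/h}|\phi_f(t)|\,dt$ as the leading term (your case (i) and (ii) computations match the paper's), and show the stochastic part is negligible by linearising the distinguished logarithm on $B_{nh}^c$, using that the linear term is unbiased up to a $P(B_{nh})$ correction and that the quadratic remainder is $O(n^{-1}he^{1/h^2})$. The genuine gap is in the term $\ex[(\hat f_{nh}(x)-f_{nh}(x))1_{B_{nh}^c}]$, i.e.\ the effect of the truncation at $\pm M_n$. You propose to control it by combining the pathwise bound $|f_{nh}(x)|=O(h^{-1}e^{1/(2h^2)})$ on $B_{nh}^c$ with $M_n=C\log n$ and the polynomial decay of $P(B_{nh})$, but none of these closes the argument: this term carries the factor $1_{B_{nh}^c}$, whose probability tends to one, so no small probability factor is available; and since $h^{-1}e^{1/(2h^2)}\asymp(\log n)^{\beta}e^{(\log n)^{2\beta}/2}$ grows much faster than $M_n=C\log n$, your crude bound does not show that the truncation is inactive --- it leaves open an error of size up to $e^{(\log n)^{2\beta}/2}$, which swamps both $h^{\alpha-1}e^{-1/h^{\alpha}}$ and $h^{\gamma-1}$.

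What is needed, and what the paper proves (the argument leading to \eqref{hateq}, invoked in the proof of the proposition), is the much sharper statement that on $B_{nh}^c$ one has $|f_{nh}(x)|\leq C/h=o(M_n)$, so that $\hat f_{nh}(x)=f_{nh}(x)$ identically on $B_{nh}^c$ for $n$ large. This requires bounding the distinguished logarithm itself: its real part is bounded because on $B_{nh}^c$ the modulus of $\phi_{emp}(t)e^{\lambda}e^{t^2/2}$ stays in a fixed band around that of $e^{\lambda\phi_f(t)}\in[e^{-\lambda},e^{\lambda}]$; its imaginary part is bounded because the curve $t\mapsto e^{\lambda\phi_f(t)}$ on $[-1/h,1/h]$ is rectifiable (finite first moment of $f$) and by Riemann--Lebesgue stays within distance $1/2$ of $1$ for large $|t|$, hence cannot wind around the origin, and neither can the uniformly close empirical curve. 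Modulus bounds alone (your ``direct consequence of the definition of $B_{nh}^c$'') do not control the argument of the logarithm, so this winding argument cannot be skipped; it is also the clean way to justify the branch selection you flag as an obstacle for the Taylor expansion. A minor further inaccuracy: $\hat f_{nh}$ is not zero on $B_{nh}$ --- it is set to zero only where the distinguished logarithm is undefined and equals the truncated $f_{nh}$ elsewhere on $B_{nh}$ --- but since $|\hat f_{nh}|\leq M_n$ there, the paper's bound $(M_n+C)P(B_{nh})$ repairs your decomposition at no cost.
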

%Combination of this proposition with Theorem \ref{deconvnoise-an1} yields the following result.
%\begin{thm}
%\label{deconvnoise-an2}
%Suppose that $\lambda$ is known. Let the estimator $\hat{f}_{nh}(x)$ be defined as in
%\eqref{deconvnoise-hatfnh}, and assume that Conditions \ref{conditionf}--\ref{conditionm} hold. Furthermore, let $X$
%have a finite second moment. If $\phi_f$ satisfies \eqref{phifdecay}, then
%\begin{equation*}
%\frac{\sqrt{n}}{he^{\frac{1}{2h^2}}}(\hat{f}_{nh}(x)-f(x))\convd
%{\mathcal{N}}\left(0,\frac{e^{2\lambda}}{2\pi^2\lambda^2(1-e^{-\lambda})^2}\right).
%\end{equation*}
%\end{thm}
\begin{rem}
\label{an-breakdown} Despite the fact that the bias\index{bias} of $\hat{f}_{nh}(x)$ asymptotically vanishes, the
consequence of Proposition \ref{deconvnoise-bias} is that the asymptotic normality\index{asymptotic normality} of
\eqref{deconvnoise-fnhf} cannot be established for the symmetric stable densities. Of course, it cannot be established
for other densities either, the characteristic functions of which decay algebraically. Examination of the proof of
Proposition \ref{deconvnoise-bias} demonstrates that in order to have that \eqref{deconvnoise-fnhf} is asymptotically
normal, one has to assume that, e.g., $\phi_f(t)=e^{-|t|^{\alpha}}$ with $\alpha>2.$ However if this is the case,
%cf.\ \citet[Example~4.9,~p.~157]{steutel},
then $\phi_f^{\prime}(0)=\phi_f^{\prime\prime}(0)=0$ and consequently the first two moments of $f$ have to vanish.
There does not exist a density with such properties.
%Conditions in Theorem \ref{deconvnoise-an2} imply that $f$ must be the normal density. This is rather restrictive, but
%on the other hand there does not seem to be any way to ensure that the bias of the estimator $\hat{f}_{nh}(x)$ is
%negligible compared to ${{n^{-1/2}}}{he^{{1/2h^2}}}$ when $f$ is e.g. a stable density with index $1<\alpha<2.$ One may
%also consider such $f$ that
%\begin{equation*}
%\label{phifdecay2}
%|\phi_f(t)|\sim |t|^{\mu_1}e^{-\frac{t^{\mu_2}}{\mu_3}},
%\end{equation*}
%where $\mu_1$ is arbitrary, $\mu_2>2$ and $\mu_3>0$ is arbitrary. Notice that in \eqref{phifdecay2} we must necessarily
%have $\mu_1\neq 0,$ because otherwise $\phi_f$ will not be a characteristic function, see e.g.\
%\citet[Example~4.9,~p.~157]{steutel}. However from the practical point of view there does not seem to exist interesting
%$f$ that satisfies \eqref{phifdecay2} and therefore we will not consider this case.
\end{rem}
\begin{rem}
It appears that in our case the square bias\index{bias!square} dominates the variance of the estimator. This is
not surprising in view of similar results obtained in \citet{butucea} for the ordinary
deconvolution\index{deconvolution} problem: suppose
\begin{equation*}
X=Y+Z,
\end{equation*}
where $Y$ and $Z$ are such that
\begin{eqnarray*}
\infint |\phi_Y(t)|^2\operatorname{exp}(2\alpha|t|^r)\leq 2\pi L,\\
b_{min}|t|^{\gamma}\operatorname{exp}(-\beta |t|^s)\leq \phi_Z(t)\leq b_{max}|t|^{\gamma^{'}}\operatorname{exp}(-\beta
|t|^s).
\end{eqnarray*}
Here $\alpha, r, L, b_{min}, r, b_{max},$ are strictly positive constants, $\gamma$ are $\gamma^{'}$ are real numbers,
and it is assumed that $r<s.$ Then the square bias\index{bias!square} of the deconvolution kernel density estimator,
which is based on observations on $X$ and is evaluted for the sinc kernel\index{kernel!sinc}, dominates the variance.
In our case $Y$ does not even have a characteristic function which vanishes at plus and minus infinity. The similarity
to the model in \citet{butucea} also holds true when comparing $\phi_f$ and $\phi_Z,$ as $\phi_Z,$ being the
characteristic function of a standard normal random variable, in a certain sense represents an extreme case among
characteristic functions of supersmooth distributions.
\end{rem}
%\section{Simulation example}
%\label{deconvnoise-simul}
%In this section we consider a simulated example.
\section{Simulation example}
\label{deconvnoise-simulation}

Practical implementation of the estimator \eqref{deconvnoise-hatfnh} is not a straightforward task. The idea we use is
similar to that of \citet{gug}. Notice that we can rewrite \eqref{deconvnoise-fnh} as
$f_{nh}(x)=f_{nh}^{(1)}(x)+f_{nh}^{(2)}(x),$ where
\begin{gather*}
f_{nh}^{(1)}(x)=\frac{1}{2\pi\lambda}\int_{0}^{\infty}
e^{-itx}\Log\left(\frac{\phi_{emp}(t)}{e^{-\lambda}e^{-t^2/2}}\right)dt,\\
f_{nh}^{(2)}(x)=\frac{1}{2\pi\lambda}\int_{0}^{\infty}
e^{itx}\Log\left(\frac{\phi_{emp}(-t)}{e^{-\lambda}e^{-t^2/2}}\right)dt.
\end{gather*}
Using the trapezoid rule and setting $v_j=\eta(j-1),$ $f_{nh}^{(1)}(x)$ can be approximated by
\begin{equation}
\label{deconvnoise-sum} f_{nh}^{(1)}(x)\approx\frac{1}{2\pi\lambda}\sum_{j=1}^N e^{-iv_j x}\psi(v_j)\eta.
\end{equation}
Here we take $N$ to be some power of $2$ and $\psi$ is defined by
\begin{equation*}
\psi(v_j)=\Log\left(\frac{\phi_{emp}(v_j)}{e^{-\lambda}e^{-t^2/2}}\right).
\end{equation*}

%The application of the Fast Fourier Transform to \eqref{deconvnoise-sum} will give us $N$ values of $f_{nh}^{(1)}$ and
%we employ a regular spacing size $\delta,$ so that our values of $x$ are
%\begin{equation*}
%x_u=-\frac{N\lambda}{2}+\delta(u-1),
%\end{equation*}
%where $u=1,\ldots,N.$ Thus we have
%\begin{equation*}
%f_{nh}^{(1)}(x_u)\approx\frac{1}{2\pi\lambda}\sum_{j=1}^N
%e^{-i\delta\eta(j-1)(u-1)}e^{iv_j{\frac{N\delta}{2}}}\psi(v_j)\eta,
%\end{equation*}
%for $u=1,\ldots,N.$ To apply the Fast Fourier Transform, we note that we must take $\delta\eta=2\pi/N.$ If we choose
%$\eta$ small to obtain a fine grid for integration, then we will obtain values of $f_{nh}^{(1)}$ at values of $x_u$
%relatively far from each other. Therefore to obtain a more accurate approximation, we apply Simpson rule's weights,
%i.e.\
%\begin{equation*}
%f_{nh}^{(1)}(x_u)\approx\frac{1}{2\pi\lambda}\sum_{j=1}^N
%e^{-i{({2\pi})/{N}}(j-1)(u-1)}e^{iv_j{({N\delta})/{2}}}\psi(v_j)\frac{\eta}{3}(3+(-1)^j-\delta_{j-1}),
%\end{equation*}
%where $\delta_j$ is the Kronecker function.
From this point on one can proceed as in \citet{gug} and evaluate \eqref{deconvnoise-sum} for a set of appropriately
selected points $x_1,\ldots x_N$ via the Fast Fourier Transform. A similar reasoning applies to $f_{nh}^{(2)}(x).$

The general difficulty with implementing the estimator is the computation of the distinguished
logarithm\index{logarithm!distinguished}, i.e.\ of function $\psi.$ A way to do this is to take a fine grid of points,
evaluate the argument of the ordinary logarithm there and if one sees large jumps of size comparable to $2\pi$ between
two consecutive points, make appropriate changes to the argument, thus obtaining an approximation to the argument of
the distinguished logarithm\index{logarithm!distinguished}. Of course this approach works only when $\phi_{emp}(t)$
does not vanish on $[-1/h,1/h].$ The latter fact can be verified in theory only, while in practice this can be done
only for a grid of points $t_1,t_2,\ldots t_k,$ which thus has to be taken rather fine, so that one does not possibly
miss the value zero.

Though our emphasis is more on theoretical aspects of decompounding under Gaussian noise, we nevertheless will consider
one simulation example in this section. We took $\lambda=1$ and $f$ the standard\index{density!standard normal} normal
density and simulated a sample of size $n=5000.$ The bandwidth\index{bandwidth} $h=0.5$ was selected by hand. The
resulting estimate $f_{nh}$ (bold dotted line) together with the true density $f$ (dashed line) is plotted in Figure
\vref{decompnoise_fig1}. We notice that the fit is quite good. Furthermore, notice that
\begin{equation*}
\operatorname{P}(N(\lambda)\geq 2)=1-2e^{-\lambda}\approx 0.264.
\end{equation*}
It turns out that we considered a nontrivial example, since a considerable number among the $Y_i$'s are {\it{sums}} of
the $W_j$'s in this case.

We should stress the fact that this simulation example serves as an illustration only and an extensive simulation study
is needed to investigate the finite sample performance of our estimator and its behaviour in practice. We have to be
very careful when generalising our conclusions concerning this simulation example because of the fact that the
empirical characteristic function\index{empirical characteristic function} is oscillatory in its tails. If the
integration step size $\eta$ is not small enough, we might miss instances when $\phi_{emp}$ crosses the negative real
axis. This will have direct consequences for the argument of the distinguished
logarithm\index{logarithm!distinguished}. This is especially true for relatively small sample sizes, for which the
empirical characteristic function $\phi_{emp}$\index{empirical characteristic function} might not approximate the true
characteristic function $\phi_X$ well enough. The issue of selection of $\eta$ in practice remains open and a thorough
simulation study is needed to obtain some practical recommendations how this can be done. Additionally, a
data-dependent method of the bandwidth\index{bandwidth} selection has to be created.

\begin{figure}[htb]
\begin{center}
\setlength{\unitlength}{1cm}
%\begin{center}
\begin{minipage}{6cm}
\begin{picture}(5.5,4.0)
\epsfxsize=5.5cm\epsfysize=4cm\epsfbox{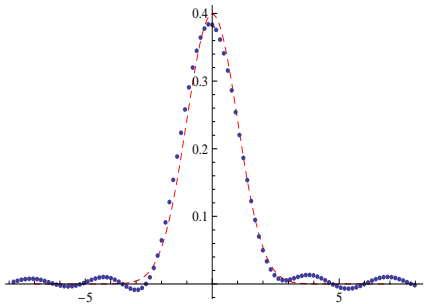}
\end{picture}
\par
\caption{\label{decompnoise_fig1} Estimation of the normal density, $n=5000.$}
\end{minipage}
\end{center}
%\end{center}
%\hfill
%\begin{minipage}{6cm}
%\begin{picture}(5.5,4.0)
%\epsfxsize=5.5cm\epsfysize=4cm\epsfbox{decompnoise_simulation_2.eps}
%\end{picture}
%\par
%\caption{\label{decompnoise_fig2} Estimation of the normal density, $n=3000.$}
%\end{minipage}
\end{figure}

\section{Proofs}
\label{deconvnoise-proofs}
\begin{proof}[Proof of Corollary \ref{deconvnoise-log}]
%By selecting $\delta$ small enough, we can achieve that, on the set $B_n^c,$ the term $\phi_{emp}(t)e^{\lambda}e^{t^2/2}$ is uniformly in $t$ for $t\in[-1/h,1/h]$ close to $\phi_{Z}(t)e^{\lambda}e^{t^2/2},$ and that it also does not vanish. The latter fact together with continuity in $t$ is enough for the distinguished logarithm to be well-defined.
%To prove the corollary, it is enough to prove that
Note that we have
\begin{equation*}
\operatorname{P}(B_{nh})\leq 4
\left(1+\frac{8\ex[X]}{e^{-\lambda}\delta}\frac{e^{1/(2h^2)}}{h}\right)\operatorname{exp}\left(\frac{-e^{-2\lambda}\delta^2ne^{-1/h^2}}{72}\right)+o(1),
\end{equation*}
where we assume that $\delta$ is small enough. This bound follows from Theorem~\ref{devroye} with $\alpha=1/h$ and
$\beta=e^{-\lambda}\delta e^{-1/(2h^2)}.$ The right-hand side converges to zero as $n\rightarrow\infty$ due to
Condition \ref{conditionh}. To prove the corollary, the only additional fact that we need to verify is that the $o(1)$
term from Theorem \ref{devroye}, which in the proof of Theorem \ref{devroye} from \citet{devroye-chiu} is bounded by
\eqref{o1term},
%\begin{equation}
%\label{o1} P\left(\left|\frac{1}{n}\sum_{j=1}^nX_j\right|\geq\frac{4}{3}\ex[|X_1|]\right),
%\end{equation}
is of order $n^{1-\rho},$ if $1<\rho<2,$ and is of order $n^{-\rho/2},$ if $\rho\geq2.$ In fact, if the inequality
\begin{equation*}
\left|\frac{1}{n}\sum_{j=1}^nX_j\right|\geq\frac{4}{3}\ex[|X_1|]
\end{equation*}
holds, then we have
\begin{multline*}
\left|\frac{1}{n}\sum_{j=1}^nX_j-\ex[X_1]\right|\geq\left|\frac{1}{n}\sum_{j=1}^nX_j\right|-\left|\ex[X_1]\right|\\
\geq\frac{4}{3}\ex[|X_1|]-\ex[|X_1|]=\frac{1}{3}\ex[|X_1|].
\end{multline*}
By Chebyshev's inequality\index{inequality!Chebyshev's} this implies
\begin{multline}
\label{chebyshev}
\operatorname{P}\left(\left|\frac{1}{n}\sum_{j=1}^nX_j\right|\geq\frac{4}{3}\ex[|X_1|]\right)\leq\operatorname{P}\left(\left|\frac{1}{n}\sum_{j=1}^nX_j-\ex[X_1]\right|\geq\frac{1}{3}\ex[|X_1|]\right)\\
\leq 3^{\rho}(\ex[|X_1|])^{-\rho}\ex\left[\left|\frac{1}{n}\sum_{j=1}^nX_j-\ex[X_1]\right|^{\rho}\right].
\end{multline}
Suppose first that $1<\rho<2.$ Then it follows from Theorem 4 of \citet{bahr} that the rightmost term in
\eqref{chebyshev} is of order $n^{1-\rho}.$ Now suppose $\rho\geq 2.$ Then Theorem 2 of \citet{dharmadhikari} implies
that the rightmost term of \eqref{chebyshev} is of order $n^{-\rho/2}.$ For explicit constants we refer to the same
papers.

Assume again that $1<\rho<2.$ To complete the proof of the corollary, we have to verify that
\begin{equation}
\label{rem1} \frac{e^{{1}/({2h^2})}}{h}\operatorname{exp}\left(\frac{-e^{-2\lambda}\delta^2ne^{-1/h^2}}{72}\right)n^{\rho-1}\rightarrow 0.
\end{equation}
To this end we take the logarithm of the left-hand side to obtain
\begin{equation}
\label{rem2} \frac{1}{2h^2}-\log h-\frac{e^{-2\lambda}\delta^2}{72}ne^{-{1}/{h^2}}+(\rho-1)\log n.
\end{equation}
The first term here is of order $(\log n)^{2\beta}$ and is negligible compared to $\log n.$ The second term is of order
$\log\log n$ and is thus negligible, while the third term dominates $\log n.$ Therefore \eqref{rem2} diverges to minus
infinity and consequently \eqref{rem1} holds. The proof for the case $\rho\geq 2$ is virtually identical and therefore
it is omitted.
\end{proof}
The following lemma will be used in the proof of Theorem \ref{deconvnoise-an1}.
\begin{lem}
\label{Bertlemma} Assume the conditions of Theorem \ref{deconvnoise-an1}. Let
\begin{equation*}
{f}_{nh}^{*}(x)=\frac{1}{2\pi}\int_{-1/h}^{1/h}e^{-itx}\frac{\phi_{emp}(t)}{e^{-t^2/2}}dt.
\end{equation*}
Then
\begin{equation*}
\frac{\sqrt{n}}{he^{1/(2h^2)}}{f}_{nh}^{*}(x)-\ex[{f}_{nh}^{*}(x)]\rightarrow {\mathcal
N}\left(0,\frac{e^{2\lambda}}{2\pi^2\lambda^2}\right)
\end{equation*}
as $n\rightarrow 0.$
\end{lem}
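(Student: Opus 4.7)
The plan is to apply the Lindeberg--Feller central limit theorem to the natural representation of $f_{nh}^*(x)-\ex[f_{nh}^*(x)]$ as a normalized sum of i.i.d.\ centred random variables. Substituting $\phi_{emp}(t)=n^{-1}\sum_{j=1}^n e^{itX_j}$ and setting
\begin{equation*}
\xi_{nj}=\frac{1}{2\pi}\int_{-1/h}^{1/h}e^{-itx}\bigl(e^{itX_j}-\phi_X(t)\bigr)e^{t^2/2}\,dt,
\end{equation*}
the target expression becomes $(\sqrt{n}he^{1/(2h^2)})^{-1}\sum_{j=1}^n \xi_{nj}$. Symmetry of the integrand in $t\mapsto -t$ shows that each $\xi_{nj}$ is real valued, and centring is automatic.

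The Lindeberg condition is then dispatched by a crude bound: using $|e^{itX_j}-\phi_X(t)|\le 2$ together with the standard Mill's ratio estimate $\int_{-1/h}^{1/h}e^{t^2/2}\,dt\sim 2he^{1/(2h^2)}$, we obtain $|\xi_{nj}|\le C h e^{1/(2h^2)}$ deterministically. Since Condition~\ref{conditionh} ensures $\sqrt{n}h\to\infty$, the threshold $\varepsilon\sqrt{n}he^{1/(2h^2)}$ eventually exceeds the uniform bound on $|\xi_{nj}|$, so the truncated second moment in Lindeberg's condition vanishes identically for all $n$ large enough.

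The main work is the variance asymptotics $\var(\xi_{n1})/(h^2 e^{1/h^2})\to e^{2\lambda}/(2\pi^2\lambda^2)$. One starts from
\begin{equation*}
\ex[\xi_{n1}^2]=\frac{1}{4\pi^2}\int\!\!\!\int_{[-1/h,1/h]^2}e^{-i(t-s)x}\bigl\{\phi_X(t-s)-\phi_X(t)\overline{\phi_X(s)}\bigr\}e^{(t^2+s^2)/2}\,ds\,dt.
\end{equation*}
The cross term is harmless: since $\phi_X(t)\overline{\phi_X(s)}\,e^{(t^2+s^2)/2}=e^{-\lambda+\lambda\phi_f(t)}\overline{e^{-\lambda+\lambda\phi_f(s)}}$ is uniformly bounded on the square by $1$, its contribution is at most $O(h^{-2})$, which is negligible relative to $h^2e^{1/h^2}$. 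For the main term, I would apply the linear change of variables $u=t-s$, $v=t+s$ (Jacobian $1/2$) and substitute $\phi_X(u)=e^{-\lambda+\lambda\phi_f(u)}e^{-u^2/2}$, reducing the main contribution to
\begin{equation*}
\frac{1}{8\pi^2}\int\!\!\!\int_{D}e^{-iux}e^{-\lambda+\lambda\phi_f(u)}e^{-u^2/4+v^2/4}\,du\,dv
\end{equation*}
on the rotated square $D=\{(u,v):|u\pm v|\le 2/h\}$. The factor $e^{v^2/4}$ forces concentration near $v=\pm 2/h$, where the cross-section in $u$ shrinks to the interval $[-(2/h-|v|),2/h-|v|]$ around $u=0$, and the slowly varying piece $e^{-iux}e^{-\lambda+\lambda\phi_f(u)}$ tends to $1$. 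A Laplace expansion in $w=2/h-|v|$ then delivers the leading order $h^2 e^{1/h^2}$ times the explicit constant that, after bookkeeping with the prefactor $1/(4\pi^2)$, yields the claimed limiting variance.

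Combining the two ingredients, the Lindeberg--Feller theorem for the triangular array $\{\xi_{nj}/(\sqrt{n}he^{1/(2h^2)})\}_{j\le n}$ gives the stated asymptotic normality. The principal obstacle is the Laplace-type asymptotic analysis: one must verify uniformly that the Taylor corrections to $\phi_f$ and to $e^{-iux}$ around $u=0$ only produce lower-order contributions, and that the other two corners of $[-1/h,1/h]^2$ (where $|t\pm s|=2/h$ and $\phi_X(t\pm s)$ is exponentially damped by $e^{-2/h^2}$) contribute negligibly. Everything else---the i.i.d.\ decomposition, realness, and Lindeberg verification---is routine in view of Conditions~\ref{conditionw}--\ref{conditionm}.
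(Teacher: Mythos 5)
Your route differs genuinely from the paper's. The paper does not attack the second moment of the Fourier integral directly: it rewrites $f^{*}_{nh}(x)-\ex[f^{*}_{nh}(x)]$ via the device of \citet{vanes2} (an auxiliary variable with density proportional to $e^{x^2/(2h^2)}$ on $[0,1]$ and an exponential variable), reduces the statistic to an i.i.d.\ sum of bounded variables $V_{nj}$ built from $\cos((X_j-x)/h)$ and $\sin((X_j-x)/h)$ plus an $O_P(n^{-1/2}h^3e^{1/(2h^2)})$ remainder, verifies Lyapunov's condition, and imports the variance normalisation from Lemma 2.1 and the distributional limit $(hX,Y_{h,j})\convd(0,U)$ from Lemma 3.1 of that paper. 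Your direct decomposition into $\xi_{nj}$, the conjugate-symmetry argument for realness, the deterministic bound $|\xi_{nj}|\le Che^{1/(2h^2)}$ that trivialises Lindeberg (for this you only need $\sqrt{n}\to\infty$, not $\sqrt{n}h\to\infty$), and the $O(h^{-2})$ estimate for the cross term are all sound, and this self-contained scheme is in principle a perfectly viable alternative to citing the van Es--Uh machinery.

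The genuine gap is that the one step carrying all the quantitative content of the lemma --- the Laplace-type variance asymptotics --- is only announced, and the constant you assert it produces is not the one the computation delivers. Following your own change of variables: for fixed $u$ the inner integral over $|v|\le 2/h-|u|$ is asymptotically $\tfrac{4}{2/h-|u|}\,e^{(2/h-|u|)^2/4}$, the factor $e^{u^2/4}$ cancels, and after $u=hw$ the weight $e^{-|w|}$ concentrates at $u=0$, where $e^{-iux}e^{-\lambda+\lambda\phi_f(u)}\to e^{-\lambda+\lambda}=1$. Hence $\var(\xi_{n1})\sim \tfrac{1}{2\pi^2}h^{2}e^{1/h^{2}}$ and your normalised statistic converges to ${\mathcal N}(0,1/(2\pi^2))$; no $\lambda$-dependence can survive at leading order precisely because $\phi_X(u)e^{u^{2}/2}\to 1$ as $u\to 0$. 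This is in fact consistent with the paper's own proof, whose final display gives variance $\sim \tfrac{1}{n}\tfrac{1}{2\pi^{2}}h^{2}e^{1/h^{2}}$; the factor $e^{2\lambda}/\lambda^{2}$ in the displayed limit law only arises later, in the proof of Theorem \ref{deconvnoise-an1}, from the prefactor $e^{\lambda}/\lambda$ multiplying the linearised term. So the sentence claiming that ``bookkeeping with the prefactor $1/(4\pi^2)$ yields the claimed limiting variance'' is exactly the point that cannot be waved through: carried out correctly it yields $1/(2\pi^2)$ rather than $e^{2\lambda}/(2\pi^{2}\lambda^{2})$, and since you defer this computation (calling it the principal obstacle), the proposal leaves the lemma's key asymptotic constant both unproved and misstated. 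You would also need to make the concentration argument uniform (dominating the blow-up of $4/(2-h^{2}|w|)$ near $|w|=2/h^{2}$ by the factor $e^{-|w|}\le e^{-1/h^{2}}$ there), but that is routine once the computation is actually written down.
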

\begin{proof}
The proof is a minor variation of the proof of Theorem $2.1$ of \citet{vanes2}. The arguments of \citet{vanes2} are
applicable, because they only use the existence and continuity of the density $q$ of $X,$ which is still true in our
case.

Let $S$ denote a random variable,
independent of the $X$'s and with a density
\begin{equation*}
f_S(x)=\frac{1}{c(h)}e^{x^2/(2h^2)}1_{[0,1]},
\end{equation*}
where the normalisation constant $c(h)=\int_0^1 e^{x^2/(2h^2)}dx.$ Furthermore, let $E_n=(S-1)/h^2$ and let $E$ be a
standard exponential random variable. If for a random variable $S$ we set
\begin{equation*}
\ex\left(\cos\left(\frac{S}{h}(X_j-x)\right)|X_j\right)=\ex_S\left[\cos\left(\frac{S}{h}(X_j-x)\right)\right],
\end{equation*}
then ${f}_{nh}^{*}(x)-\ex[{f}_{nh}^{*}(x)]$ can be written as
\begin{align}
f_{nh}^{*}(x)-\ex [f_{nh}^{*}(x)] &= \frac{c(h)}{\pi nh}\sum_{j=1}^n \Big( \cos\Big(\frac{X_j-x}{h}\Big)\ex_{E}\left[\cos\Big(-h
E(X_j-x)\Big)\right]\nonumber\\
& -\sin\Big(\frac{X_j-x}{h}\Big)\ex_{E}\left[\sin\Big(-h E(X_j-x)\Big)\right]\Big)\nonumber\\
& -\ex\left[\cos\Big(\frac{X_j-x}{h}\Big)\ex_{E}\left[\cos\Big(-hE(X_j-x)\Big)\right]\right]\nonumber\\
& +\ex\left[\sin\Big(\frac{X_j-x}{h}\Big)\ex_{E}\left[\sin\Big(-h
E(X_j-x)\Big)\right]\right]\Big)\nonumber
\\
& + O_P\left(\frac{1}{\sqrt{n}}h^{3}e^{1/(2h^2)}\right), \label{approxerror}
\end{align}
cf.\ equation (31) of \citet{vanes2}.

A straightforward computation yields
\begin{equation*}
\ex_E\left[\cos\Big(-hE(X_j-x)\Big)\right] = \frac{1}{{1+h^{2}(X_j-x)^2}} =w_1(h(X_j-x))
\end{equation*}
and
\begin{equation*}
\ex_E\left[\sin\Big(-h^E(X_j-x)\Big)\right] = -\frac{h(X_j-x)}{1+h^{2}(X_j-x)^2} =w_2(h(X_j-x)),
\end{equation*}
where
\begin{equation*}
w_1(u)=\frac{1}{1+u^2}\quad\mbox{and}\quad w_2(u)=- \frac{u}{1+u^2}.
\end{equation*}

Define the random variables $V_{nj}$   as
\begin{align}
V_{nj}&=\cos\Big(\frac{X_j-x}{h}\Big)w_1(h(X_j-x))
-\sin\Big(\frac{X_j-x}{h}\Big)w_2(h(X_j-x))\nonumber\\
&= \cos(Y_{h,j})w_1(h(X_j-x)) -\sin(Y_{h,j})w_2(h(X_j-x)),
\end{align}
where $Y_{h,j}= (X_j-x)/h\operatorname{mod}2\pi$. Then
\begin{equation}\label{fasv}
{f}_{nh}^{*}(x)-\ex [{f}_{nh}^{*}(x)]=\frac{c(h)}{\pi h}\,\frac{1}{n}\sum_{j=1}^n (V_{n,j}-\ex [V_{n,j}])+
O_P\left(\frac{1}{\sqrt{n}}h^3e^{1/(2h^2)}\right).
\end{equation}
Note that by the inequality $|a+b|^p\leq 2^p(|a|^p+|b|^p), p\geq 0$, we have
\begin{equation}
\label{lyap1} \ex [(V_{n,j}-\ex [V_{n,j}])^4]\leq 16(\ex [V_{n,j}^4]+(\ex [V_{n,j}])^4).
\end{equation}

Since the characteristic function $\phi_X$ is integrable, by \citet[Theorem~$6.2.3$]{chung} the density $q$ of $X$ is
continuous and bounded. Hence $X$ satisfies the conditions of Lemma $3.1$ of \citet{vanes2}, and we have
$(hX,Y_{h,j})\convd (0,U)$. It also holds that
\begin{align*}
\ex [V_{nj}]&=\ex [\cos(Y_{h,j})w_1(h(X_j-x)) -\sin(Y_{h,j})w_2(h(X_j-x))]\nonumber\\ &\rightarrow
\ex [\cos(U)w_1(0) -\sin(U)w_2(0)]=0
\end{align*}
and
\begin{align*}
\ex [V_{nj}^4]&=\ex [(\cos(Y_{h,j})w_1(h(X_j-x)) -\sin(Y_{h,j})w_2(h(X_j-x)))^4]\nonumber\\ &\rightarrow \ex[
(\cos(U)w_1(0) -\sin(U)w_2(0))^4]=\ex [(\cos(U))^4]=\frac{3}{8},
\end{align*}
because the cosine is a bounded and continuous function. The asymptotic variance of $V_{n,j}$ is given by
\begin{equation*}
\var [V_{n,j}]\rightarrow \ex [(\cos(U)w_1(0) -\sin(U)w_2(0))^2]=\ex [(\cos(U))^2]=\frac{1}{2}.
\end{equation*}
It follows from \eqref{lyap1} that
\begin{equation}
\frac{\ex [|V_{n,j}-\ex [V_{n,j}]|^4]}{n(\var[V_{n,j}])^2} =\frac{O(1)}{n(\frac{1}{2}+o(1))^2}\rightarrow 0.
\end{equation}
Consequently, Lyapunov's condition with $\delta=2$ is satisfied for $V_{n,j},$ and hence both ${1}/{n}\sum_{j=1}^n
(V_{n,j}-\ex [V_{n,j}])$ and ${c(h)}/{(\pi h n)}\sum_{j=1}^n (V_{n,j}-\ex [V_{n,j}])$ are asymptotically normal. The
asymptotic variance of the latter is given by
\begin{equation*}
\var\left[\frac{c(h)}{\pi h}\frac{1}{n}\sum_{j=1}^n (V_{n,j}-\ex [V_{n,j}])\right]= \frac{1}{n}\frac{c(h)^2}{\pi^2
h^2}\,\var[V_{n,1}] \sim \frac{1}{n}\frac{1}{2\pi^2}h^{2}\,e^{1/h^2},
\end{equation*}
which follows from Lemma $2.1$ of \citet{vanes2}. This completes the proof of the lemma.
\end{proof}
\begin{proof}[Proof of Theorem \ref{deconvnoise-an1}]
Write $\zeta_n(h)=\sqrt{n}h^{-1}e^{-1/(2h^2)}.$ We have
\begin{multline*}
\zeta_n(h)(\hat{f}_{nh}(x)-\operatorname{E}[\hat{f}_{nh}(x)])=\zeta_n(h)(\hat{f}_{nh}(x)-f(x))+\zeta_n(h)(f(x)-\operatorname{E}[\hat{f}_{nh}(x)])\\
=\zeta_n(h)((\hat{f}_{nh}(x)-f(x))1_{B_{nh}}-\operatorname{E}[(\hat{f}_{nh}(x)-f(x))1_{B_{nh}}])\\
+\zeta_n(h)((\hat{f}_{nh}(x)-f(x))1_{B_{nh}^c}-\operatorname{E}[(\hat{f}_{nh}(x)-f(x))1_{B_{nh}^c}]),
\end{multline*}
where the set $B_{nh}$ is defined as in Section \ref{deconvnoise-results}. Now notice that, for an arbitrary constant
$\eta>0,$ by Chebyshev's inequality\index{inequality!Chebyshev's} we have
\begin{multline}
\label{deconvnoise-asn1}
\operatorname{P}(\zeta_n(h)|(\hat{f}_{nh}(x)-f(x))1_{B_{nh}}-\operatorname{E}[(\hat{f}_{nh}(x)-f(x))1_{B_{nh}}]|>\eta)\\
\leq\frac{2}{\eta}\zeta_n(h)\operatorname{E}[|\hat{f}_{nh}(x)-f(x)|1_{B_{nh}}].
\end{multline}
Since $\phi_f$ is integrable, it follows that $|f(x)|\leq C,$ where $C$ is some constant. It then follows that the
probability at the left-hand side of \eqref{deconvnoise-asn1} is bounded by
\begin{equation}
\label{deconvnoise-asn2} \frac{2}{\eta}\zeta_n(h)(M_n+C)\operatorname{P}(B_{nh}).
\end{equation}
Now we apply the bound of Proposition \ref{deconvnoise-log} to $\operatorname{P}(B_{nh}).$ To prove that
\eqref{deconvnoise-asn2} converges to zero, it is sufficient to verify that
\begin{equation}
\label{log0} \frac{\sqrt{n}}{he^{{1}/{(2h^2)}}}(M_n+C)\frac{1}{n^{\rho-1}}\rightarrow 0
\end{equation}
for $3/2<\rho<2.$ This is obviously true due to Condition \ref{conditionh} and \ref{conditionm}. The proof that
\eqref{deconvnoise-asn1} converges to zero for $\rho\geq 2$ is likewise straightforward. Therefore
\begin{equation*}
\zeta_n(h)\{(\hat{f}_{nh}(x)-f(x))1_{B_{nh}}-\operatorname{E}[(\hat{f}_{nh}(x)-f(x))1_{B_{nh}}]\}\convp 0.
\end{equation*}
Hence by Slutsky's\index{theorem!Slutsky's} theorem, see \citet[Lemma~2.8]{vaart}, this term can be neglected and it
suffices to consider
\begin{equation*}
\zeta_n(h)\{(\hat{f}_{nh}(x)-f(x))1_{B_{nh}^c}-\operatorname{E}[(\hat{f}_{nh}(x)-f(x))1_{B_{nh}^c}]\}.
\end{equation*}
%By the arguments similar to those that we used on p.~\pageref{30}, on the set $B_{nh}^c,$ for $n$ large enough, the
%truncation in \eqref{deconvnoise-hatfnh} becomes unimportant,
%\begin{equation}
%\label{hateq} \hat{f}_{nh}(x)1_{[B_{nh}^c]}=f_{nh}(x)1_{[B_{nh}^c]}.
%\end{equation}

We have that
\begin{equation*}
\log\left(\left|\frac{\phi_X(t)}{e^{-\lambda}e^{-t^2/2}}\right|\right),
\end{equation*}
i.e. the real part of the distinguished logarithm
\begin{equation}
\label{dlog1} \Log\left(\frac{\phi_X(t)}{e^{-\lambda}e^{-t^2/2}}\right)
\end{equation}
is bounded.  On the set $B_{nh}^c,$ if $\delta$ is selected small enough, $\phi_{emp}(t)e^{\lambda}e^{t^2/2}$ is
arbitrarily close to $\phi_{X}(t)e^{\lambda}e^{t^2/2}$ and also stays bounded away from zero at a positive distance.
Therefore
\begin{equation*}
\int_{-1/h}^{1/h}\log\left(\left|\frac{\phi_{emp}(t)}{e^{-\lambda}e^{-t^2}}\right|\right)dt\leq C\frac{1}{h},
\end{equation*}
where $C$ is a constant. This grows slower than $M_n$ and hence $M_n$ will eventually dominate. Now we turn to the
imaginary part. Let $\psi:{\mathbb{R}}\rightarrow {\mathbb{C}},$ where
\begin{equation*}
\psi(t)=\phi_X(t)e^{\lambda}e^{t^2/2}=e^{\lambda\phi_f(t)}.
\end{equation*}
By the Riemann-Lebesgue theorem $\psi(t)$ converges to $1$ as $|t|\rightarrow\infty$ and hence there exists $t^{*}>0,$
such that
\begin{equation}
\label{cut1} |\psi(t)-1|<\frac{1}{2}, \quad |t|>t^{*}.
\end{equation}
Furthermore, we have
\begin{equation}
\label{cut2} |\psi(t)|\geq e^{-\lambda}, \quad t\in {\mathbb{R}}.
\end{equation}
Since $f$ has a finite first moment, by \citet[Theorem~1,~p.~182]{schwartz} $\phi_f$ and consequently $\psi$ are
continuously differentiable. Therefore, the path $\psi:[-t^{*},t^{*}]\rightarrow {\mathbb{C}}$ is rectifiable, i.e.\
has a finite length. In view of this fact and \eqref{cut2}, $\psi:[-t^{*},t^{*}]\rightarrow {\mathbb{C}}$ cannot spiral
infinitely many times around zero and for $|t|>t^{*}$ it cannot make a turn around zero at all because of \eqref{cut1}.
Consequently, for the same reason as we gave above for the real part of the distinguished logarithm, the truncation on
this set becomes unimportant for the argument as well and we have
\begin{equation}
\label{hateq} \hat{f}_{nh}(x)1_{B_{nh}^c}=f_{nh}(x)1_{B_{nh}^c}.
\end{equation}

Thus we have to consider
\begin{equation*}
\zeta_n(h)\{(f_{nh}(x)-f(x))1_{B_{nh}^c}-\operatorname{E}[(f_{nh}(x)-f(x))1_{B_{nh}^c}]\}.
\end{equation*}
Plugging in the expressions for $f_{nh}(x)$ and $f(x),$ we obtain that the above expression is equal to
\begin{multline}
\label{c} \zeta_n(h) \Biggl\{
\frac{1}{2\pi\lambda}\int_{-1/h}^{1/h}e^{-itx}\Log\left(\frac{\phi_{emp}(t)}{\phi_X(t)}\right)dt1_{B_{nh}^c}\\
-\ex\left[\frac{1}{2\pi\lambda}\int_{-1/h}^{1/h}e^{-itx}\Log\left(\frac{\phi_{emp}(t)}{\phi_X(t)}\right)dt1_{B_{nh}^c}\right]\\
-\frac{1}{2\pi}\Biggl\{\int_{-\infty}^{-1/h}e^{-itx}\phi_f(t)dt+\int_{1/h}^{\infty}e^{-itx}\phi_f(t)dt \Biggr\}
(1_{B_{nh}^c}-\ex[1_{B_{nh}^c}]).
\end{multline}
First notice that
\begin{equation*}
\left|\int_{-\infty}^{-1/h}e^{-itx}\phi_f(t)dt+\int_{1/h}^{\infty}e^{-itx}\phi_f(t)dt\right|\leq\int_{\infty}^{\infty}|\phi_f(t)|dt<\infty.
\end{equation*}
Consequently, the last term in \eqref{c} converges to zero in probability if
\begin{equation*}
\zeta_n(h)(1_{B_{nh}^c}-\ex[1_{B_{nh}^c}])\convp 0.
\end{equation*}
This in turn is equivalent to
\begin{equation*}
\zeta_n(h)(1_{B_{nh}}-\ex[1_{B_{nh}}])\convp 0,
\end{equation*}
because $1_{B_{nh}^c}=1-1_{B_{nh}}.$ By Chebyshev's inequality\index{inequality!Chebyshev's} it is sufficient to prove
that $\zeta_n(h)\operatorname{P}(B_{nh})\rightarrow 0.$ However, this follows from \eqref{deconvnoise-asn2} and
\eqref{log0}.

Hence, by Slutsky's\index{theorem!Slutsky's} theorem we have to consider the first term of \eqref{c},
\begin{multline*}
\zeta_n(h) \Biggl\{
\frac{1}{2\pi\lambda}\int_{-1/h}^{1/h}e^{-itx}\Log\left(\frac{\phi_{emp}(t)}{\phi_X(t)}\right)dt1_{B_{nh}^c}\\
-\ex\left[\frac{1}{2\pi\lambda}\int_{-1/h}^{1/h}e^{-itx}\Log\left(\frac{\phi_{emp}(t)}{\phi_X(t)}\right)dt1_{B_{nh}^c}\right]
\Biggr\}.
\end{multline*}
Rewrite this as
\begin{multline}
%\begin{equation}
\label{deconvnoise-asnorm1}
%\begin{split}
\zeta_{n}(h) \Biggl\{
\frac{1}{2\pi\lambda}\int_{-1/h}^{1/h} e^{-itx}\left(\frac{\phi_{emp}(t)}{\phi_X(t)}-1\right)dt1_{B_{nh}^c}\\
-\ex\left[\frac{1}{2\pi\lambda}\int_{-1/h}^{1/h}e^{-itx}\left(\frac{\phi_{emp}(t)}{\phi_X(t)}-1\right)dt1_{B_{nh}^c}\right]\Biggr\}\\
+\zeta_{n}(h)\Biggl\{\frac{1}{2\pi\lambda}\int_{-1/h}^{1/h}e^{-itx}R_{nh}(t)dt1_{B_{nh}^c}\\
-\ex\left[\frac{1}{2\pi\lambda}\int_{-1/h}^{1/h}e^{-itx}R_{nh}(t)dt1_{B_{nh}^c}\right] \Biggr\},
%\end{split}
%\end{equation}
\end{multline}
where
\begin{equation}
\label{deconvnoise-rnh}
R_{nh}(t)=\operatorname{Log}\left(1+\left\{\frac{\phi_{emp}(t)}{\phi_X(t)}-1\right\}\right)-\left\{\frac{\phi_{emp}(t)}{\phi_X(t)}-1\right\}.
\end{equation}
Notice that on the set $B_{nh}^c$ we have
\begin{equation*}
\left|\frac{\phi_{emp}(t)}{\phi_X(t)}-1\right|<\frac{1}{2},
\end{equation*}
if $\delta$ is small enough. Indeed, it suffices to choose $\delta$ in such a way that $e^{\lambda}\delta<1/2.$ From
the inequality $|\Log(1+z)-z|\leq |z|^2,$ valid for $z$ sufficiently small, it
follows %from Lemma \ref{decomplog} of Section \ref{decomp-proofs}
that
\begin{equation*}
|R_{nh}(t)|\leq \left|\frac{\phi_{emp}(t)}{\phi_X(t)}-1\right|^2.
\end{equation*}
Consequently, to prove that the second term in \eqref{deconvnoise-asnorm1} asymptotically vanishes, it is sufficient to
prove that
\begin{multline}
\label{rnhbound}
\zeta_{n}(h)\frac{1}{2\pi\lambda}\ex\left[\int_{-1/h}^{1/h}|R_{nh}(t)|dt1_{B_{nh}^c}\right]\\
\leq\zeta_{n}(h)\frac{1}{2\pi\lambda}
\ex\left[\int_{-1/h}^{1/h}\left|\frac{\phi_{emp}(t)}{\phi_X(t)}-1\right|^2dt\right]\rightarrow 0.
\end{multline}
Since ${|\phi_Y(t)|^{-1}}\leq e^{2\lambda}$, we have
\begin{multline*}
\ex\left[\int_{-1/h}^{1/h}\left|\frac{\phi_{emp}(t)}{\phi_X(t)}-1\right|^2dt\right]\\
\leq C
e^{{1}/{h^2}}\ex\left[\infint|\phi_{emp}(t)\phi_w(ht)-\phi_X(t)\phi_w(ht)|^2dt\right],
\end{multline*}
where $\phi_w$ is the characteristic function of the sinc kernel\index{kernel!sinc} and $C$ is a constant. By
Parseval's identity the expectation on the right-hand side equals
\begin{equation*}
\frac{1}{2\pi}\ex\left[\infint(q_{nh}(x)-q\ast w_h(x))^2dx\right].
\end{equation*}
This in turn equals the integrated variance of a kernel estimator $q_{nh},$ which is of order $(nh)^{-1},$ see
\citet[Proposition~1.7]{tsyb}. Thus we have to show that
\begin{equation*}
\frac{\sqrt{n}}{he^{{1}/{(2h^2)}}}e^{{1}/{h^2}}\frac{1}{nh}=\frac{e^{{1}{(2h^2)}}}{h^2\sqrt{n}}\rightarrow 0.
\end{equation*}
The result follows from Condition \ref{conditionh} and can be verified by taking the logarithm of the left-hand side of
the above expression and concluding that it diverges to minus infinity. We obtain
\begin{equation*}
\frac{1}{2h^2}-\log h^2-\frac{1}{2}\log n\rightarrow -\infty,
\end{equation*}
because $h^{-2}=(\log n)^{2\beta}$ and $2\beta<1,$ and hence the dominating term on the left-hand side in the above
expression is the last one.

We deal with the first summand in \eqref{deconvnoise-asnorm1}. Rewrite it as
\begin{multline}
\label{deconvnoise-asnorm2} \zeta_{n}(h)
\Biggl\{\frac{1}{2\pi\lambda}\int_{-1/h}^{1/h}e^{-itx}\frac{\phi_{emp}(t)}{\phi_X(t)}dt1_{B_{nh}^c}\\
-\ex\left[\frac{1}{2\pi\lambda}\int_{-1/h}^{1/h}e^{-itx}\frac{\phi_{emp}(t)}{\phi_X(t)}dt1_{B_{nh}^c}\right]\Biggr\}\\
-\zeta_{n}(h)\frac{1}{2\pi\lambda}\int_{-1/h}^{1/h}e^{-itx}dt(1_{B_{nh}^c}-\ex[1_{B_{nh}^c}]).
\end{multline}
We want to show that the second summand in this expression converges to zero in probability. Notice, that it is bounded
by
\begin{equation*}
C\zeta_{n}(h)\frac{1}{h}|1_{B_{nh}}-\ex[1_{B_{nh}}]|,
\end{equation*}
because $1_{B_{nh}^c}=1-1_{B_{nh}}.$ Here $C$ is some constant. By Chebyshev's inequality\index{inequality!Chebyshev's}
it is sufficient to prove that
\begin{equation*}
\zeta_{n}(h)\frac{1}{h}\operatorname{P}(B_{nh})\rightarrow 0.
\end{equation*}
This is obviously true thanks to \eqref{deconvnoise-asn1} and \eqref{deconvnoise-asn2}.
%Condition \ref{conditionh} and Corollary \ref{deconvnoise-log}.

Thus, by Slutsky's\index{theorem!Slutsky's} theorem, instead of \eqref{deconvnoise-asnorm2} we may consider
\begin{equation*}
\zeta_{n}(h)\Biggl\{\frac{1}{2\pi\lambda}\int_{-1/h}^{1/h}e^{-itx}\frac{\phi_{emp}(t)}{\phi_X(t)}dt1_{B_{nh}^c}
-\ex\left[\frac{1}{2\pi\lambda}\int_{-1/h}^{1/h}e^{-itx}\frac{\phi_{emp}(t)}{\phi_X(t)}dt1_{B_{nh}^c}\right]\Biggr\}.
\end{equation*}
Note that for $|t|\leq 1/h$
\begin{equation*}
|\phi_X(t)|=\left|e^{-{t^2}/{2}}e^{-\lambda+\lambda\phi_f(t)}\right|\geq e^{-{1}/{(2h^2)}}e^{-2\lambda}
\end{equation*}
holds. Consequently, we have
\begin{multline}
\label{subs1}
\zeta_{n}(h)\ex\left[\left|\frac{1}{2\pi\lambda}\int_{-1/h}^{1/h}e^{-itx}\frac{\phi_{emp}(t)}{\phi_X(t)}dt1_{B_{nh}}\right|\right]\\
\leq\zeta_{n}(h)\frac{1}{2\pi\lambda}\frac{2}{h}e^{{1}/{(2h^2)}}e^{2\lambda}\operatorname{P}(B_{nh}),
\end{multline}
which converges to zero thanks to the fact that $\operatorname{P}(B_{nh})=O(n^{1-\rho})$ for $3/2<\rho<2,$ and
$\operatorname{P}(B_{nh})=O(n^{-\rho/2})$ for $\rho\geq 2,$ see Proposition \ref{deconvnoise-log}.

Hence by Slutsky's\index{theorem!Slutsky's} theorem we may consider
\begin{equation*}
\zeta_{n}(h)\Biggl\{\frac{1}{2\pi\lambda}\int_{-1/h}^{1/h}e^{-itx}\frac{\phi_{emp}(t)}{\phi_X(t)}dt
-\ex\left[\frac{1}{2\pi\lambda}\int_{-1/h}^{1/h}e^{-itx}\frac{\phi_{emp}(t)}{\phi_X(t)}dt\right]\Biggr\}.
\end{equation*}
By \eqref{deconv-phix} the expression above can be rewritten as
\begin{multline}
\label{deconvnoise-asnorm3}
\zeta_{n}(h)\frac{e^{\lambda}}{\lambda}\frac{1}{2\pi}\int_{-1/h}^{1/h}e^{-itx}\left(\frac{\phi_{emp}(t)}{e^{-t^2/2}}
-\frac{\phi_X(t)}{e^{-t^2/2}}\right)dt\\+
\zeta_{n}(h)\frac{e^{\lambda}}{\lambda}\frac{1}{2\pi}\int_{-1/h}^{1/h}e^{-itx}\left(\frac{\phi_{emp}(t)}{e^{-t^2/2}}-
\frac{\phi_X(t)}{e^{-t^2/2}}\right)\left(e^{-\lambda\phi_f(t)}-1\right)dt.
\end{multline}
By Lemma \ref{Bertlemma} the first summand in this expression is asymptotically normal with zero mean and variance
given by $\sigma^2={e^{2\lambda}}/{(2\pi^2\lambda^2)}.$

Now we will show that the second term in \eqref{deconvnoise-asnorm3} asymptotically vanishes in probability. By
Chebyshev's inequality\index{inequality!Chebyshev's} it suffices to show
\begin{multline*}
%\begin{equation*}
(\zeta_{n}(h))^2\ex\left[\left|\frac{e^{\lambda}}{\lambda}\frac{1}{2\pi}\int_{-1/h}^{1/h}e^{-itx}\left(\frac{\phi_{emp}(t)}{e^{-t^2/2}}-
\frac{\phi_X(t)}{e^{-t^2/2}}\right)\left(e^{-\lambda\phi_f(t)}-1\right)dt\right|^2\right]\\
= (\zeta_{n}(h))^2\operatorname{Var}\left[\int_{-1/h}^{1/h}e^{-itx}\frac{\phi_{emp}(t)}{e^{-t^2/2}}
\left(e^{-\lambda\phi_f(t)}-1\right)dt\right]\rightarrow 0.
%\end{equation*}
\end{multline*}
Using the independence of the $X_i$'s, after further simplification we obtain that we have to prove that
\begin{equation*}
\frac{1}{h^2e^{1/h^2}}\left(\int_{-1/h}^{1/h}e^{t^2/2}|e^{-\lambda\phi_f(t)}-1|dt\right)^2\rightarrow 0.
\end{equation*}
Thus we have to prove that
\begin{equation*}
\frac{1}{he^{(1/2h^2)}}\int_{-1/h}^{1/h}e^{t^2/2}|e^{-\lambda\phi_f(t)}-1|dt\rightarrow 0.
\end{equation*}
From \citet{gug} we have
\begin{equation*}
|e^{-\lambda\phi_f(t)}-1|\leq C_{\lambda}|\phi_f(t)|,
\end{equation*}
where the constant $C_{\lambda}$ depends on $\lambda$ only. Therefore it suffices to prove
\begin{equation}
\label{tail1} \frac{1}{he^{1/(2h^2)}}\int_{-1/h}^{1/h}e^{t^2/2}|\phi_f(t)|dt\rightarrow 0.
\end{equation}
This can be done either via an application of L'H\^opital's rule\index{rule!L'H\^opital's} or via the method similar to
the one used in the proof of Lemma 5 in \citet{vanes3}. We follow the latter path. It is enough to consider the
integral over $[0,1/h]$ as the integral over $[-1/h,0]$ can be dealt with in a similar fashion. After the change of
integration variable $v=(1-ht)/h^2,$ we obtain
\begin{equation*}
h\int_0^{1/h^2}e^{\frac{(1-h^2v)^2}{2h^2}}\phi_f\left(\frac{1-vh^2}{h}\right)dv=
he^{\frac{1}{2h^2}}\int_0^{1/h^2}e^{-v+\frac{v^2h^2}{2}}\phi_f\left(\frac{1-vh^2}{h}\right)dv.
\end{equation*}
By the Riemann-Lebesgue theorem\index{theorem!Riemann-Lebesgue's} $\lim_{|u|\rightarrow\infty}\phi_f(u)=0,$ and
therefore by the dominated convergence theorem\index{theorem!dominated convergence} the above expression is of order
$o(he^{{1}/{(2h^2)}}).$ The dominated convergence theorem\index{theorem!dominated convergence} is applicable, because
\begin{equation*}
(e^{-v/2}-e^{-v+v^2h^2/2})1_{[0,1/h^2]}\geq 0
\end{equation*}
and hence $e^{-v/2}$ can be taken as the dominating function. Consequently \eqref{tail1} vanishes as $h\rightarrow 0$
and this argument concludes the proof of the theorem.

%Two cases can be distinguished. First suppose that
%\begin{equation*}
%\infint e^{t^2/2}|\phi_f(t)|dt<\infty.
%\end{equation*}
%Then \eqref{tail1} obviously will vanish as $h\rightarrow 0.$ Now suppose
%\begin{equation*}
%\infint e^{t^2/2}|\phi_f(t)|dt=\infty.
%\end{equation*}
%Then clearly
%\begin{equation*}
%\lim_{h\rightarrow 0}\int_{-1/h}^{1/h}e^{t^2/2}|\phi_f(t)|dt\rightarrow\infty\text.
%\end{equation*}
%It is sufficient to consider the integral over $[0,1/h],$ since the one over $[-1/h,0]$ can be treated in the similar
%fashion. We want to apply L'H\^opital's rule to \eqref{tail1}. Thus we have to compute the derivatives of the numerator
%and denominator of \eqref{tail1} with respect to $h.$ We have
%\begin{eqnarray*}
%\left(\int_{0}^{1/h}e^{t^2/2}|\phi_f(t)|dt\right)^{\prime}=-\frac{1}{h^2}e^{\frac{1}{2h^2}}\left|\phi_f\left(\frac{1}{h}\right)\right|,\\
%\left(he^{\frac{1}{2h^2}}\right)^{\prime}=e^{\frac{1}{2h^2}}-e^{\frac{1}{2h^2}}\frac{1}{h^2}.
%\end{eqnarray*}
%The ratio of these two quantities equals ${-|\phi_f({1}/{h})|}{(h^2-1)^{-1}}$ and this converges to zero as
%$h\rightarrow 0,$ because $\lim_{|u|\rightarrow\infty}\phi_f(u)=0$ by the Riemann-Lebesgue theorem. This argument
%completes the proof of the theorem.
\end{proof}
\begin{proof}[Proof of Proposition \ref{deconvnoise-bias}]
%First note that Remark \ref{exponentialn} can be easily extended to the case where $\ex[|X|^{\rho}]<\infty$ for $\rho>1.$ In such a case the remainder term in Theorem \ref{devroye} is of order $n^{\rho-1}.$
We will prove both parts of the statement simultaneously. Write
\begin{equation}
\label{deconvnoise-b0}
\ex[\hat{f}_{nh}(x)]-f(x)=\ex[(\hat{f}_{nh}(x)-f(x))1_{B_{nh}}]+\ex[(\hat{f}_{nh}(x)-f(x))1_{B_{nh}^c}].
\end{equation}
Notice that for some $C>0,$
\begin{equation}
\label{deconvnoise-b1} \left|\ex[(\hat{f}_{nh}(x)-f(x))1_{B_{nh}}]\right|\leq (M_n+C)\operatorname{P}(B_{nh}).
\end{equation}
Here we used the fact that $f$ is bounded, because $\phi_f$ is integrable. Due to Theorem \ref{devroye}, Corollary
\ref{deconvnoise-log} and Conditions \ref{conditionh} and \ref{conditionm}, we see that \eqref{deconvnoise-b1}
converges to zero as $n\rightarrow\infty.$ Moreover, this term is negligible compared to
$h^{\alpha-1}e^{-1/h^{\alpha}}$ (case (i)) or $h^{\gamma-1}$ (case (ii)), since
$\operatorname{P}(B_{nh})=O(n^{1-\rho})$ or $\operatorname{P}(B_{nh})=O(n^{-\rho/2}),$ depending whether $1<\rho<2$ or
$\rho\geq 2,$ see Proposition \ref{deconvnoise-log}.

Now we turn to the second summand in \eqref{deconvnoise-b0}. By selecting $\delta$ small enough, on the set $B_{nh}^c$
truncation in the definition of $\hat{f}_{nh}(x)$ becomes unimportant, see the arguments that led to \eqref{hateq}.
Hence we have to deal with $\ex[({f}_{nh}(x)-f(x))1_{B_{nh}^c}].$ Using expressions for $f_{nh}(x)$ and $f(x),$ we see
that this term equals
\begin{multline}
\label{deconvnoise-b3}
\ex\left[\frac{1}{2\pi\lambda}\int_{-1/h}^{1/h}e^{-itx}\Log\left(\frac{\phi_{emp}(t)}{\phi_X(t)}\right)dt1_{B_{nh}^c}\right]\\-\frac{1}{2\pi}\int_{-\infty}^{-1/h}e^{-itx}\phi_f(t)dtP(B_{nh}^c)-\frac{1}{2\pi}\int_{1/h}^{\infty}e^{-itx}\phi_f(t)dt\operatorname{P}(B_{nh}^c).
\end{multline}
The last two terms in this expression can be treated similarly and therefore we consider only the second one. It will
turn out that these are the leading terms in the bias\index{bias!expansion} expansion. Notice that
\begin{equation*}
\frac{1}{2\pi}\int_{1/h}^{\infty}e^{-itx}\phi_f(t)dt\operatorname{P}(B_{nh}^c)\rightarrow 0
\end{equation*}
as $n\rightarrow 0,$ because $\phi_f$ is integrable. Moreover, if $\phi_f(t)=O\left(e^{-|t|^{\alpha}}\right),$
$\alpha>1,$ then
\begin{equation*}
\label{breakdown} \int_{1/h}^{\infty}|\phi_f(t)|dt= O\left({h^{\alpha-1}e^{-{1}/{h^{\alpha}}}}\right).
\end{equation*}
%This converges to zero under Condition \ref{conditionh}.
%This follows from the fact that for $u>0$ we have
%\begin{equation*}
%\int_{u}^{\infty}|\phi_f(t)|dt\leq C \frac{e^{-{|u|^{\alpha}}}}{u^{\alpha-1}},
%\end{equation*}
%as $u\rightarrow\infty,$ for some constant $C.$
This fact can be proved using the same type of arguments as in \citet[Example~3.6.3,~p.~123]{casella}. Furthermore, if
$\phi_f(t)= O\left(|t|^{-\gamma}\right),$ then
\begin{equation*}
\int_{1/h}^{\infty}|\phi_f(t)|dt\leq C \int_{1/h}^{\infty}t^{-\gamma}dt=O(h^{\gamma-1}).
\end{equation*}

Now we turn to the first term in \eqref{deconvnoise-b3}. Rewrite it as
\begin{multline}
\label{deconvnoise-b4}
\ex\left[\frac{1}{2\pi\lambda}\int_{-1/h}^{1/h}e^{-itx}\left(\frac{\phi_{emp}(t)}{\phi_X(t)}-1\right)dt1_{B_{nh}^c}\right]\\
+\ex\left[\frac{1}{2\pi\lambda}\int_{-1/h}^{1/h}e^{-itx}R_{nh}(t)dt1_{B_{nh}^c}\right],
\end{multline}
where $R_{nh}(t)$ is defined as in \eqref{deconvnoise-rnh}. Consider the first term in this expression. Rewrite it as
\begin{multline*}
\ex\left[\frac{1}{2\pi\lambda}\int_{-1/h}^{1/h}e^{-itx}\left(\frac{\phi_{emp}(t)}{\phi_X(t)}-1\right)dt1_{B_{nh}^c}\right]\\
=\ex\left[\frac{1}{2\pi\lambda}\int_{-1/h}^{1/h}e^{-itx}\left(\frac{\phi_{emp}(t)}{\phi_X(t)}-1\right)dt\right]\\-
\ex\left[\frac{1}{2\pi\lambda}\int_{-1/h}^{1/h}e^{-itx}\left(\frac{\phi_{emp}(t)}{\phi_X(t)}-1\right)dt1_{B_{nh}}\right].
\end{multline*}
The first summand here is equal to zero. As far as the second summand is concerned, notice that
\begin{equation*}
\left|\int_{-1/h}^{1/h}e^{-itx}\left(\frac{\phi_{emp}(t)}{\phi_X(t)}-1\right)dt\right|\leq
C\frac{1}{h}e^{{1}/{(2h^2)}},
\end{equation*}
where $C$ is some constant. This inequality follows from the facts that for $t\in[-1/h,1/h],$
\begin{eqnarray*}
\left|\frac{\phi_{emp}(t)}{\phi_X(t)}-1\right|\leq \left|\frac{\phi_{emp}(t)}{\phi_X(t)}\right|+1,\\
\left|\frac{\phi_{emp}(t)}{\phi_X(t)}\right|\leq e^{2\lambda}e^{1/(2h^2)},
\end{eqnarray*}
because $\phi_X(t)=\phi_Y(t)e^{-t^2/2}$ and $|\phi_Y(t)|\geq e^{-2\lambda}.$ Consequently
\begin{equation*}
\left|\ex\left[\frac{1}{2\pi\lambda}\int_{-1/h}^{1/h}e^{-itx}\left(\frac{\phi_{emp}(t)}{\phi_X(t)}-1\right)dt1_{B_{nh}}\right]\right|\leq
C\frac{1}{h}e^{{1}/{(2h^2)}} \operatorname{P}(B_{nh}).
\end{equation*}
This term will converge to zero as $n\rightarrow\infty$ due to Theorem \ref{devroye} and Proposition
\ref{deconvnoise-log}. Moreover, due to the same facts, it is negligible compared to $h^{\gamma-1}$ or to
$h^{\alpha-1}e^{-1/h^{\alpha}}.$

Now we consider the second term in \eqref{deconvnoise-b4}. Notice that this term is of order $(nh)^{-1},$ which was
shown in the proof of Theorem \ref{deconvnoise-an1} in the arguments concerning \eqref{rnhbound}. Consequently it will
be negligible compared to $h^{\gamma-1}$ or to $h^{\alpha-1}e^{-1/h^{\alpha}}.$ This completes the proof of the
proposition.
\end{proof}
\begin{proof}[Proof of Remark \ref{an-breakdown}]
We have to study the behaviour of
\begin{equation}
\label{breakdown2} \zeta_{n}(h)h^{\alpha-1}e^{-{1}/{h^{\alpha}}}.
\end{equation}
After taking the logarithm, we obtain
\begin{equation}
\label{breakdown3} \log\left(\frac{1}{2\pi\alpha}\right)+\frac{1}{2}\log n - \log h - \frac{1}{2h^2}+({\alpha-1})\log
h-\frac{1}{h^{\alpha}}.
\end{equation}
Dominating terms here are the second, the fourth and the last one. Now note that the fourth and the last terms equal
$-(1/2)(\log n)^{2\beta}$ and $-(\log n)^{\alpha\beta},$ respectively. In view of $2\beta<1$ and $\alpha\beta<1,$ these
terms are dominated by $\log n$ and hence \eqref{breakdown3} diverges to plus infinity.
%Since $\alpha\leq 2$ and $\beta<1/2,$ therefore $\alpha\beta<1,$ and consequently \eqref{breakdown3} diverges to plus infinity.
It follows that so does \eqref{breakdown2}. The case of $\zeta_{n}(h)h^{\gamma-1}\rightarrow\infty$ is trivial given
Condition \ref{conditionh}.
\end{proof}

%\begin{proof}[Proof of Theorem \ref{deconvnoise-an2}]
%The result easily follows from Theorem \ref{deconvnoise-an1}, formula \eqref{deconvnoise-anf} and Proposition \ref{deconvnoise-bias}.
%\end{proof}

%\bigskip

\medskip

\noindent {\bf Acknowledgment.} The author would like to thank Bert van Es and Peter Spreij for helpful comments.

%\bibliographystyle{plainnat}
%\bibliography{thesis-bibl}

\end{document}